\newtheorem{theorem}{Theorem}
\newtheorem{proposition}{Proposition}
\newtheorem{lemma}{Lemma}
\newtheorem{remark}{Remark}
\newcommand{\reals}{\mathbb{R}}
\newcommand{\E}{\mathbb{E}}
\newcommand{\be}{\mathbf{e}}
\newcommand{\bx}{\mathbf{x}}
\newcommand{\bw}{\mathbf{w}}
\newcommand{\bg}{\mathbf{g}}
\newcommand{\bu}{\mathbf{u}}
\newcommand{\bv}{\mathbf{v}}
\newcommand{\br}{\mathbf{r}}
\newcommand{\by}{\mathbf{y}}
\newcommand{\Ocal}{\mathcal{O}}
\newcommand{\Acal}{\mathcal{A}}
\newcommand{\Fcal}{\mathcal{F}}
\newcommand{\Scal}{\mathcal{S}}
\newcommand{\norm}[1]{\|#1\|}
\newcommand{\secref}[1]{Sec.~\ref{#1}}
\newcommand{\subsecref}[1]{Subsection~\ref{#1}}
\newcommand{\figref}[1]{Fig.~\ref{#1}}
\renewcommand{\eqref}[1]{Eq.~(\ref{#1})}
\newcommand{\lemref}[1]{Lemma~\ref{#1}}
\newcommand{\thmref}[1]{Thm.~\ref{#1}}
\title{Can We Find Near-Approximately-Stationary Points\\ of Nonsmooth Nonconvex Functions?}
\author{Ohad Shamir\\Weizmann Institute of Science}
\date{}
\begin{document}

\maketitle

\begin{abstract}
	It is well-known that given a bounded, smooth nonconvex function, standard gradient-based methods can find $\epsilon$-stationary points (where the gradient norm is less than $\epsilon$) in $\mathcal{O}(1/\epsilon^2)$ iterations. However, many important nonconvex optimization problems, such as those associated with training modern neural networks, are inherently \emph{not} smooth, making these results inapplicable. Moreover, as recently pointed out in  \citet{zhang2020complexity}, it is generally impossible to provide finite-time guarantees for finding an $\epsilon$-stationary point of nonsmooth functions. Perhaps the most natural relaxation of this is to find points which are \emph{near} such $\epsilon$-stationary points. In this paper, we show that even this relaxed goal is hard to obtain in general, given only black-box access to the function values and gradients. We also discuss the pros and cons of  alternative approaches. 
\end{abstract}

\section{Introduction}

We consider optimization problems associated with functions $f:\reals^d\mapsto \reals$, where $f(\cdot)$ is globally Lipschitz and bounded from below, but otherwise satisfies no special structure -- in particular, it is not necessarily convex, and not necessarily differentiable everywhere. Clearly, in high dimensions, and for sufficiently complex $f(\cdot)$, it is generally impossible to efficiently find a global minimum. However, if we relax our goal to finding (approximate) \emph{stationary} points of $f(\cdot)$, then the nonconvexity is no longer an issue. In particular, it is known that if $f(\cdot)$ is \emph{smooth} -- namely, differentiable and with a Lipschitz gradient -- then for any $\epsilon>0$, simple gradient-based algorithms can find $\bx$ such that $\norm{\nabla f(\bx)}\leq \epsilon$, using only $\Ocal(1/\epsilon^2)$ gradient computations, independent of the dimension (see for example \cite{nesterov2012make,jin2017escape,carmon2019lower}). 

Unfortunately, many optimization problems of interest are inherently \emph{not} smooth. For example, when training modern neural networks, involving max operations and rectified linear units, the associated optimization problem is virtually always nonconvex as well nonsmooth. Thus, the positive results above, which crucially rely on smoothness, are inapplicable. Although there are positive results even for nonconvex nonsmooth functions, they tend to be either purely asymptotic in nature (e.g., \citet{benaim2005stochastic,kiwiel2007convergence,davis2018stochastic,majewski2018analysis}), or require additional structure which many problems of interest lack, such as weak convexity or some separation between nonconvex and nonsmooth components (e.g., \citet{duchi2018stochastic,davis2019stochastic,drusvyatskiy2019efficiency,bolte2018first,beck2020convergence}). This leads to the interesting question of developing black-box algorithms with non-asymptotic guarantees, for finding stationary points of general nonsmooth nonconvex functions.

In an elegant recent work, \citet{zhang2020complexity} raise this question, and provide several contributions. First, they point out that in a black-box model, where the algorithm accesses the function only by computing its values and gradients at various points, it is generally impossible in to find an approximately stationary point with finitely many queries, simply because the gradient can change abruptly and thus ``hide'' a stationary point inside some arbitrarily small neighborhood. Instead, they propose the following relaxation (based on the notion of $\delta$-differential introduced in  \citet{goldstein1977optimization}): Letting $\partial f(\bx)$ denote the gradient set\footnote{Under a standard generalization of gradients to nonsmooth functions -- see \subsecref{subsec:preliminaries}.} of $f(\cdot)$ at $\bx$, we say that a point $\bx$ is a \emph{$(\delta,\epsilon)$-stationary point}, if
\begin{equation}\label{eq:destat}
\min\{\norm{\bu}:\bu\in \text{conv}\{\cup_{\by:\norm{\by-\bx}\leq \delta}~\partial f(\by)\}\}~\leq~\epsilon~,
\end{equation}
where $\text{conv}\{\cdot\}$ is the convex hull. In words, there exists a convex combination of gradients at a $\delta$-neighborhood of $\bx$, whose norm is at most $\epsilon$. The authors then proceed to provide a dimension-free, gradient-based algorithm for finding $(\delta,\epsilon)$-stationary points, using $\Ocal(1/\delta\epsilon^3)$ queries, as well as study related settings. 

Although this constitutes a very useful algorithmic contribution to nonsmooth optimization, it is important to note that a $(\delta,\epsilon)$-stationary point $\bx$ (as defined above) \emph{does not} imply that $\bx$ is $\delta$-close to an $\epsilon$-stationary point of $f(\cdot)$, nor that $\bx$ necessarily resembles a stationary point. Intuitively, this is because the convex hull of the gradients might contain a small vector, without any of the gradients being particular small. This is formally demonstrated in the following proposition:

\begin{proposition}\label{prop:stat}
	For any $\delta>0$, there exists a differentiable function $f(\cdot)$ on $\reals^2$ which is $2\pi$-Lipschitz on a ball of radius $2\delta$ around the origin, and the origin is a $(\delta,0)$-stationary point, yet $\min_{\bx:\norm{\bx}\leq \delta}\norm{\nabla f(\bx)}\geq 1$.
\end{proposition}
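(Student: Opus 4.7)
The plan is to construct $f$ explicitly via a simple trigonometric Ansatz. I take $f(x,y) = (x-a)\cos(by)$ for constants $a,b>0$ to be chosen in terms of $\delta$.

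A direct computation gives $\nabla f(x,y) = (\cos(by),\,-b(x-a)\sin(by))$, so
\[
\norm{\nabla f(x,y)}^2 \;=\; \cos^2(by) + b^2(x-a)^2 \sin^2(by).
\]
Letting $K := b^2(x-a)^2$, this equals $1 + (K-1)\sin^2(by)$, which always lies between $\min(K,1)$ and $\max(K,1)$. The proof then proceeds in three steps.

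\textbf{Step 1: Gradient-norm lower bound on the $\delta$-ball.} I would pick $a = \delta + 1/b$, so that $|x-a| \ge 1/b$ for every $x \in [-\delta,\delta]$ and hence $K \ge 1$ throughout the $\delta$-ball. The identity above then yields $\norm{\nabla f}^2 \ge 1$ uniformly on the $\delta$-ball.

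\textbf{Step 2: Convex-hull condition at the origin.} I would then set $b = \pi/(2\delta)$, so that $by = \pm\pi/2$ precisely at $y = \pm\delta$. The two points $(0,\pm\delta)$ lie in the $\delta$-ball, and at them the gradient reduces to $(0,\pm ab) = (0,\pm(\pi/2+1))$, an antipodal pair of vertical vectors. Their midpoint is the origin, so $0$ lies in the convex hull of $\nabla f$ over the $\delta$-ball, which is exactly the $(\delta,0)$-stationarity condition \eqref{eq:destat}.

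\textbf{Step 3: Lipschitz bound on the $2\delta$-ball.} On $\norm{\bx} \le 2\delta$ one has $|x-a| \le 2\delta + a = 3\delta + 2\delta/\pi$, so $K \le b^2(3\delta + 2\delta/\pi)^2 = (3\pi/2+1)^2$. Combined with the identity above, this gives $\norm{\nabla f} \le 3\pi/2 + 1 < 2\pi$ on the $2\delta$-ball, which is the required $2\pi$-Lipschitz property. Finally, since $f$ is a product of smooth functions it is $C^\infty$ on $\reals^2$, and hence in particular differentiable everywhere.

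The tension I expect to be the main obstacle is the competition between Steps 1 and 3: pushing $a$ and $b$ large enough to force $\norm{\nabla f} \ge 1$ inside the $\delta$-ball tends to blow up $\norm{\nabla f}$ on the larger $2\delta$-ball. Verifying that the tight choice $a = \delta(1 + 2/\pi)$, $b = \pi/(2\delta)$ sits just below the $2\pi$ threshold is the only nontrivial calculation, and this narrow margin presumably explains why the constant $2\pi$ appears in the statement.
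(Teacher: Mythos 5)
Your proof is correct and takes essentially the same approach as the paper: the paper uses $f(u,v)=(2\delta+u)\sin\!\left(\tfrac{\pi}{2\delta}v\right)$, which is the same trigonometric-product ansatz with the same frequency $b=\pi/(2\delta)$, differing only in the phase (sine vs.\ cosine) and the shift of the linear factor ($-2\delta$ vs.\ your $\delta(1+2/\pi)$), and the gradient-norm analysis is identical.
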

\begin{figure}\label{fig:stat}
	\centering
	\includegraphics[trim=0cm 1.5cm 0cm 1.5cm,clip=true,width=0.7\linewidth]{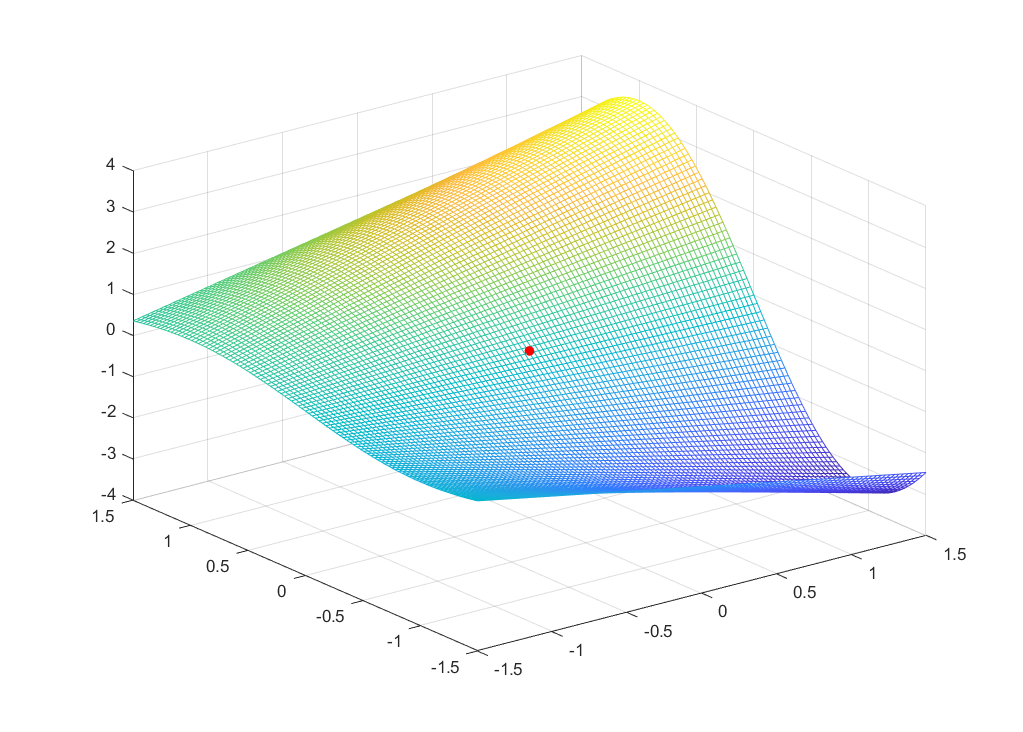}
	\caption{The function used in the proof of Proposition \ref{prop:stat}, for $\delta=1$. The origin (which fulfills the definition of a $(1,0)$-stationary point) is marked with a red dot. Best viewed in color.}
\end{figure}
\begin{proof}
	Fixing some $\delta>0$, consider the function
	\[
	f(u,v)~:=~(2\delta+u)\sin\left(\frac{\pi}{2\delta}v\right)
	\]
	(see \figref{fig:stat} for an illustration). This function is differentiable, and its gradient satisfies
	\[
	\nabla f(u,v)~=~ \left(\sin\left(\frac{\pi}{2\delta}v\right)~,~\frac{\pi}{2\delta}(2\delta+u)\cos\left(\frac{\pi}{2\delta}v\right)\right)~.
	\]
	First, we note that
	\[
	\frac{1}{2}\left(\nabla f(0,\delta)+\frac{1}{2}\nabla f(0,-\delta)\right)~=~
	\frac{1}{2}\left((1,0)+(-1,0)\right)~=~(0,0),
	\]
	which implies that $(0,0)$ is in the convex hull of the gradients at a distance at most $\delta$ from the origin, hence the origin is a $(\delta,0)$-stationary point. Second, we have that
	\begin{equation}\label{eq:nablaf2}
	\norm{\nabla f(u,v)}^2 = \sin^2\left(\frac{\pi}{2\delta}v\right)+\left(\frac{\pi}{2\delta}\right)^2(2\delta+u)^2\cos^2\left(\frac{\pi}{2\delta}v\right)~.
	\end{equation}
	For any $(u,v)$ of norm at most $2\delta$, we must have $|u|\leq 2\delta$, and therefore the above is at most
	\begin{align*}
	\sin^2\left(\frac{\pi}{2\delta}v\right)+\left(\frac{\pi}{2\delta}\right)^2(2\delta+2\delta)^2\cos^2\left(\frac{\pi}{2\delta}v\right)~\leq~ 4\pi^2\left(\sin^2\left(\frac{\pi}{2\delta}v\right)+\cos^2\left(\frac{\pi}{2\delta}v\right)\right)
	~=~ 4\pi^2~,
	\end{align*}
	which implies that the function is $2\pi$-Lipschitz on a ball of radius $2\delta$ around the origin. Finally, for any $(u,v)$ of norm at most $\delta$, we have $|u|\leq \delta$, so \eqref{eq:nablaf2} is at least
	\[
	\sin^2\left(\frac{\pi}{2\delta}v\right)+\left(\frac{\pi}{2\delta}\right)^2(2\delta-\delta)^2\cos^2\left(\frac{\pi}{2\delta}v\right)~\geq~ \sin^2\left(\frac{\pi}{2\delta}v\right)+\cos^2\left(\frac{\pi}{2\delta}v\right)~=~1~.
	\]
\end{proof}

\begin{remark}
Although the function $f(\cdot)$ in the proof has a constant Lipschitz parameter only close to the origin, it can be easily modified to be globally Lipschitz and bounded, for example by considering the function
\[
\tilde{f}(\bx) ~=~ \begin{cases} f(\bx)& \norm{\bx}\leq 2\delta\\ \max\left\{0,2-\frac{\norm{\bx}}{2\delta}\right\}\cdot f\left(\frac{2\delta}{\norm{\bx}}\bx\right) & \norm{\bx}>2\delta\end{cases}~,
\]
which is identical to $f(\cdot)$ in a ball of radius $2\delta$ around the origin, but decays to $0$ for larger $\bx$, and can be verified to be globally bounded and Lipschitz independent of $\delta$. 
\end{remark}

This result suggests that we should drop the $\text{conv}\{\cdot\}$ operator from the definition of $(\delta,\epsilon)$-stationarity in \eqref{eq:destat}, or equivalently, try to find \emph{near}-approximately-stationary points: Namely, getting $\delta$-close to a point $\bx$ such that $\partial f(\bx)$ contains an element with norm at most $\epsilon$. This is arguably the most natural way to relax the goal of finding $\epsilon$-stationary points, while hopefully still getting meaningful algorithmic guarantees. Unfortunately, we will show in the following section that this already sets the bar too high: For a very large class of gradient-based algorithms (and in fact, all of them under a mild assumption), it is impossible to find near-approximately-stationary point with worst-case finite-time guarantees, for small enough constant $\delta,\epsilon$. Thus, we cannot strengthen the notion of $(\delta,\epsilon)$-stationarity in this manner, and still hope to get similar algorithmic guarantees. In \secref{sec:discussion}, we further discuss the result and its implications.

\section{Hardness of Finding Near-Approximate-Stationary Points}

We begin by formalizing the setting in which we prove our hardness result (\subsecref{subsec:preliminaries}), followed by the main result in \subsecref{subsec:main} and its proof in \subsecref{subsec:proof}.

\subsection{Preliminaries}\label{subsec:preliminaries}

\textbf{Generalized Gradients and Stationary points.} First, we formalize the notion of gradients and stationary points for nonsmooth functions (which may not be everywhere differentiable). Given a Lipschitz function $f(\cdot)$ and a point $\bx$ in its domain, we let $\partial f(\bx)$ denote the set of \emph{generalized} gradients (following \citet{clarke1990optimization} and \citet{zhang2020complexity}), which is perhaps the most standard extension of the notion of gradients to nonsmooth nonconvex functions. For Lipschitz functions (which are almost everywhere differentiable by Rademacher's theorem), one simple way to define it is
\[
\partial f(\bx)~:=~ \text{conv}\{\bu:\bu=\lim_{k\rightarrow \infty} \nabla f(\bx_k), \bx_k\rightarrow \bx\}
\]
(namely, the convex hull of all limit points of $\nabla f(\bx_k)$, over all sequences $\bx_1,\bx_2,\ldots$ of differentiable points of $f(\cdot)$ which converge to $\bx$). With this definition, a \emph{(Clarke) stationary point} with respect to $f(\cdot)$ is a point $\bx$ satisfying $\mathbf{0}\in \partial f(\bx)$. Also, given some $\epsilon \geq 0$, we say that $\bx$ is an \emph{$\epsilon$-stationary point} with respect to $f(\cdot)$, if there is some $\bu\in \partial f(\bx)$ such that $\norm{\bu}\leq \epsilon$. To make the problem of getting near $\epsilon$-stationary points non-trivial, and following \citet{zhang2020complexity}, we will focus on functions $f(\cdot)$ that are both globally Lipschitz and bounded from below. In particular, we will assume that $f(\mathbf{0})-\inf_{\bx}f(\bx)$ is upper bounded by a constant (this is without loss of generality, as $\mathbf{0}$ can be replaced by any other fixed reference point). 

\textbf{Oracle Complexity.} We will study the algorithmic efficiency of the problem using the standard framework of (first-order) oracle complexity \citep{nemirovskyyudin1983}: Given a class of Lipschitz and bounded functions $\mathcal{F}$ as above, we associate with each $f\in \Fcal$ an \emph{oracle}, which for any $\bx$ in the domain of $f(\cdot)$, returns the value and a (generalized) gradient of $f(\cdot)$ at $\bx$. We focus on iterative algorithms which can be described via an interaction with such an oracle: At every iteration $t$, the algorithm chooses an iterate $\bx_t$, and receives from the oracle a generalized gradient and value of $f(\cdot)$ at $\bx_t$. The algorithm then uses the values and gradients obtained up to iteration $t$ to choose the point $\bx_{t+1}$ in the next iteration. This framework captures essentially all first-order algorithms for black-box optimization. In this framework, we fix some iteration budget $T$, and study the properties of the iterates $\bx_1,\ldots,\bx_T$ as a function of $T$ and the properties of $\Fcal$. 

\textbf{Algorithmic Families.} We will focus on two broad families of algorithms, which together span nearly all algorithms of interest: The first is the class of all \emph{deterministic} algorithms (denoted as $\Acal_{det}$), which are all oracle-based algorithms where where $\bx_1$ is chosen deterministically, and for all $t>1$, $\bx_{t}$ is some deterministic function of $\bx_1$ and the previously observed values and gradients. The second is the class of all \emph{linear-span} algorithms (denoted as $\Acal_{span}$), which are all deterministic or randomized oracle-based algorithms that initialize at some $\bx_1$ (which we will take to be $\mathbf{0}$ without loss of generality), and 
\[
\forall t>1,~~ \bx_t ~\in~ \text{span}(\bg_1,\ldots,\bg_{t-1})~,
\]
with $\bg_i$ being the gradient provided by the oracle at iteration $i$, given query point $\bx_i$. 

\subsection{Main Result}\label{subsec:main}

Our main result is the following:

\begin{theorem}\label{thm:main}
	There exist a large enough universal constant $C>0$ and a small enough universal constant $c>0$ such that the following holds: For any algorithm in $ \Acal_{det}\cup \Acal_{span}$, any $T>1$, and any $d\geq 2T$, there is a function $h(\cdot)$ on $\reals^{d}$ such that
	\begin{itemize}
		\item $h(\cdot)$ is $C$-Lipschitz, and $h(\mathbf{0})-\inf_{\bx}h(\bx)\leq C$.
		\item With probability at least $1-T\exp(-cd)$ over the algorithm's randomness (or deterministically if the algorithm is deterministic), the iterates $\bx_1,\ldots,\bx_T$ produced by the algorithm satisfy 
		\[
		\inf_{\bx\in\Scal}\min_{t\in \{1,\ldots,T\}}\norm{\bx_t-\bx}\geq c~,
		\]
		where $\Scal$ is the set of $c$-stationary points of $f(\cdot)$. 
	\end{itemize}
\end{theorem}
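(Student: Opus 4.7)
The proof is a hardness construction in the spirit of Nemirovski--Yudin resisting-oracle lower bounds. The overarching idea is that with $T$ queries in dimension $d\geq 2T$, the algorithm can explore at most a $T$-dimensional subspace of $\reals^d$, leaving an orthogonal complement of dimension $\geq T$ in which to ``hide'' the $c$-stationary set of $h$ at distance much larger than $c$ from every iterate.

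My first step would be to handle $\Acal_{span}$, since the structural property $\bx_t\in\operatorname{span}(\bg_1,\dots,\bg_{t-1})$ makes subspace confinement automatic: an oracle returning gradients inside a fixed $T$-dimensional subspace $V\subset\reals^d$ forces every iterate into $V$. For $\Acal_{det}$, I would apply a similar construction but with randomized parameters (e.g.\ a random hiding direction) and derandomize by averaging, yielding a deterministic function against the deterministic algorithm. For randomized algorithms in $\Acal_{span}$, the construction remains randomized via a uniformly random rotation of $\reals^d$; concentration of measure on $S^{d-1}$, combined with a union bound over the $T$ iterations, then supplies the $1-T\exp(-cd)$ probability that the hidden direction is nearly orthogonal to every iterate.

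Given the explored subspace $V$, a unit vector $\bv\in V^\perp$, and geometric parameters $R>c>0$ and $r\in(0,R-c)$, my candidate is $h(\bx)=\max\{h_{\mathrm{vis}}(\bx),h_{\mathrm{hid}}(\bx)\}$. I would take $h_{\mathrm{vis}}$ to depend only on $P_V\bx$, so that its gradient automatically lies in $V$ no matter how large a step the algorithm takes along $V$; it is further engineered to strictly dominate $h_{\mathrm{hid}}$ on a thin tube $\{\|P_{V^\perp}\bx\|\leq\rho\}$ around $V$ and to have subgradient there of norm strictly greater than $c$, so that no query in the tube can itself be a $c$-stationary point. The piece $h_{\mathrm{hid}}$ supplies the $c$-stationary points of $h$; a natural candidate is $h_{\mathrm{hid}}(\bx)=\max\{0,\|\bx-R\bv\|-r\}$, whose Clarke-stationary set is the closed ball of radius $r$ around $R\bv$, at distance $\geq R-r>c$ from $V$. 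Outside the tube, $h_{\mathrm{hid}}$ dominates and carries the stationary structure; inside the tube, $h=h_{\mathrm{vis}}$ has no $c$-stationary point by construction. One then verifies that $h$ is $C$-Lipschitz, that $h(\mathbf{0})-\inf h\leq C$ (with $h_{\mathrm{hid}}\geq 0$ providing the global lower bound), and that the oracle responses are consistent with elements of $\partial h$ at each iterate.

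The main technical obstacle will be controlling the transition region $\{h_{\mathrm{vis}}\approx h_{\mathrm{hid}}\}$: there $\partial h$ is the convex hull of the two constituent gradients and could in principle contain small elements, producing unintended $c$-stationary points of $h$ close to $V$ that would defeat the construction. Ruling this out requires a careful quantitative choice of the parameters $R$, $r$, $\rho$ and of the slope and shift of $h_{\mathrm{vis}}$, so that the $V$-pointing gradient of $h_{\mathrm{vis}}$ and the essentially $V^\perp$-pointing radial gradient of $h_{\mathrm{hid}}$ are never close to anti-parallel in the transition region, while simultaneously keeping the global Lipschitz constant of $h$ a universal constant independent of $T$ and $d$.
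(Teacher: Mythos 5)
Your proposal correctly identifies the broad strategy (hiding structure in directions invisible to the algorithm, concentration on $S^{d-1}$ for the randomized case, composing a ``visible'' and a ``hidden'' piece), but there is a fatal structural gap in the role of $h_{\mathrm{vis}}$ that cannot be repaired by tuning $R$, $r$, $\rho$, slope and shift. Since $h_{\mathrm{vis}}$ depends only on $P_V\bx$, is $C$-Lipschitz, and must dominate $h_{\mathrm{hid}}\geq 0$ over the tube, $h_{\mathrm{vis}}$ restricted to $V\cong\reals^T$ is a Lipschitz function bounded from below. By Ekeland's variational principle, any such function has $c$-stationary points for every $c>0$; and at any such point $\bz\in V$ where $h_{\mathrm{vis}}$ strictly dominates, $\partial h(\bz)=\partial h_{\mathrm{vis}}(\bz)$, so $\bz$ is a $c$-stationary point of $h$ itself. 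These unintended stationary points live precisely in the subspace $V$ where the iterates are confined, and your argument provides no control of the distance from the iterates to them. The alternative --- letting $h_{\mathrm{vis}}$ be unbounded below on $V$ so that its subgradient norm never drops below $c$ --- is no better: it must then eventually fall below $h_{\mathrm{hid}}$, at which point the oracle's gradient comes from $h_{\mathrm{hid}}$ and escapes $V$, breaking subspace confinement; a linear-span algorithm, which may take arbitrarily large steps, reaches this crossover region in a constant number of iterations.

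What the paper does, and what your proposal lacks, is a hardness-of-convex-optimization ingredient that resolves exactly this tension. The paper's ``visible'' function is $\tilde f(\bx)=\norm{M^{1/2}(\bx-\bx^*)}$, a norm-like convex function whose only near-stationary behavior is concentrated at the single point $\bx^*$. A classical oracle lower bound for well-conditioned strictly convex quadratics (the paper's Theorem~\ref{thm:hardquad}, proved via a tridiagonal-chain construction) guarantees that every algorithm in $\Acal_{det}\cup\Acal_{span}$ stays at distance $\geq\exp(-T)$ from $\bx^*$. A tiny ``channel'' of width $\Theta(\exp(-T))$, opened at $\bx^*$ in a direction $\bw$ chosen orthogonal to all the iterate directions, both (i) removes every $\epsilon$-stationary point of $\tilde f$, pushing the true stationary set a constant distance away to a flat region, and (ii) remains invisible to the algorithm precisely because the iterates are simultaneously nearly orthogonal to $\bw$ \emph{and} bounded away from $\bx^*$ --- both conditions are needed, and your sketch supplies only the first. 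Separately, ``derandomize by averaging'' for $\Acal_{det}$ is not sound (averaging does not preserve stationary-set geometry); the paper instead uses an adaptive resisting oracle that simulates the deterministic algorithm to choose the orthogonal directions $\bu_1,\ldots,\bu_T$ one at a time.
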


The theorem implies that for a very large family of algorithms, it is impossible to obtain worst-case, finite-time guarantees for finding near-approximately-stationary points of Lipschitz, bounded-from-below functions

Before continuing, we note that the result can be extended to \emph{any} oracle-based algorithm (randomized or deterministic) under a widely-believed assumption -- see remark \ref{remark:anyalg} in the proof for details. We also make two additional remarks:

\begin{remark}[More assumptions on $h(\cdot)$]
	The Lipschitz functions $h(\cdot)$ used to prove the theorem are based on a simple composition of affine functions, the Euclidean norm function $\bx\mapsto \norm{\bx}$, and the max function. Thus, the result also holds for more specific families of functions considered in the literature, which satisfy additional regularity properties, as long as they contain any Lipschitz function composed as above (for example, Hadamard semi-differentiable functions in \citet{zhang2020complexity}, Whitney-stratifiable functions in \citet{davis2018stochastic}, regular functions in \citet{clarke1990optimization}, etc.).
\end{remark}

\begin{remark}[Strengthening of Proposition \ref{prop:stat}]\label{remark:propstrong}
	The proof of \thmref{thm:main} uses a construction that actually strengthens Proposition \ref{prop:stat}: It implies that for any $\delta,\epsilon$ smaller than some constants, there is a Lipschitz, bounded-from-below function on $\reals^d$, such that the origin is $(\delta,0)$-stationary, yet there are no $\epsilon$-stationary points even at a constant distance from the origin. See remark \ref{remark:propstrongdetails} in the proof for details.
\end{remark}

\begin{figure}
	\includegraphics[trim=2cm 0cm 1cm 0cm,clip=true,width=0.5\linewidth]{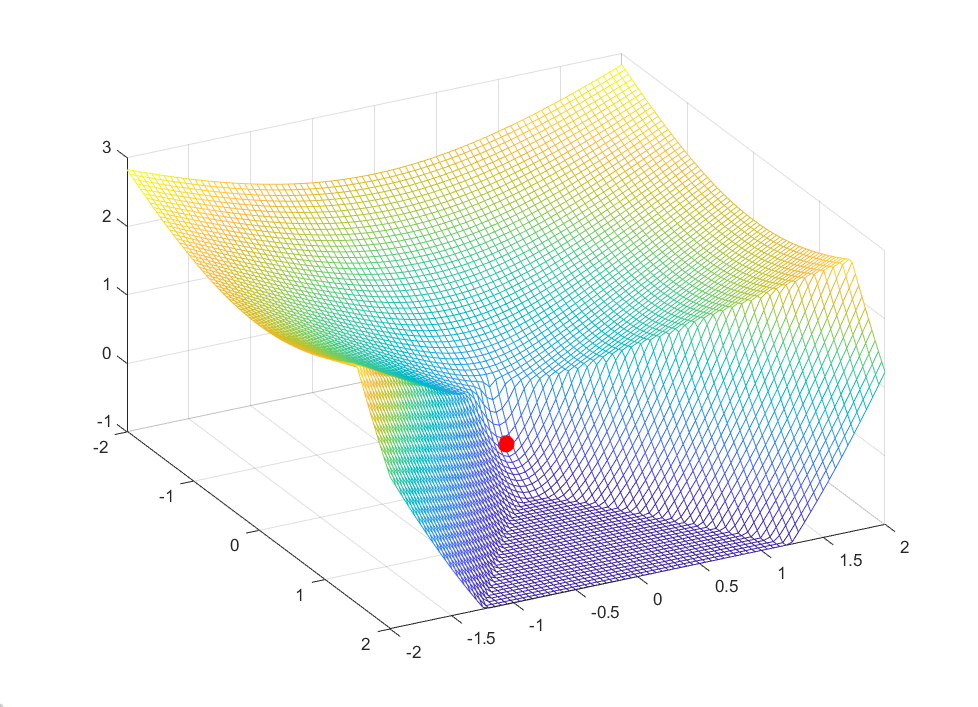}%
	\includegraphics[trim=1cm 0cm 1cm 0cm,clip=true,width=0.5\linewidth]{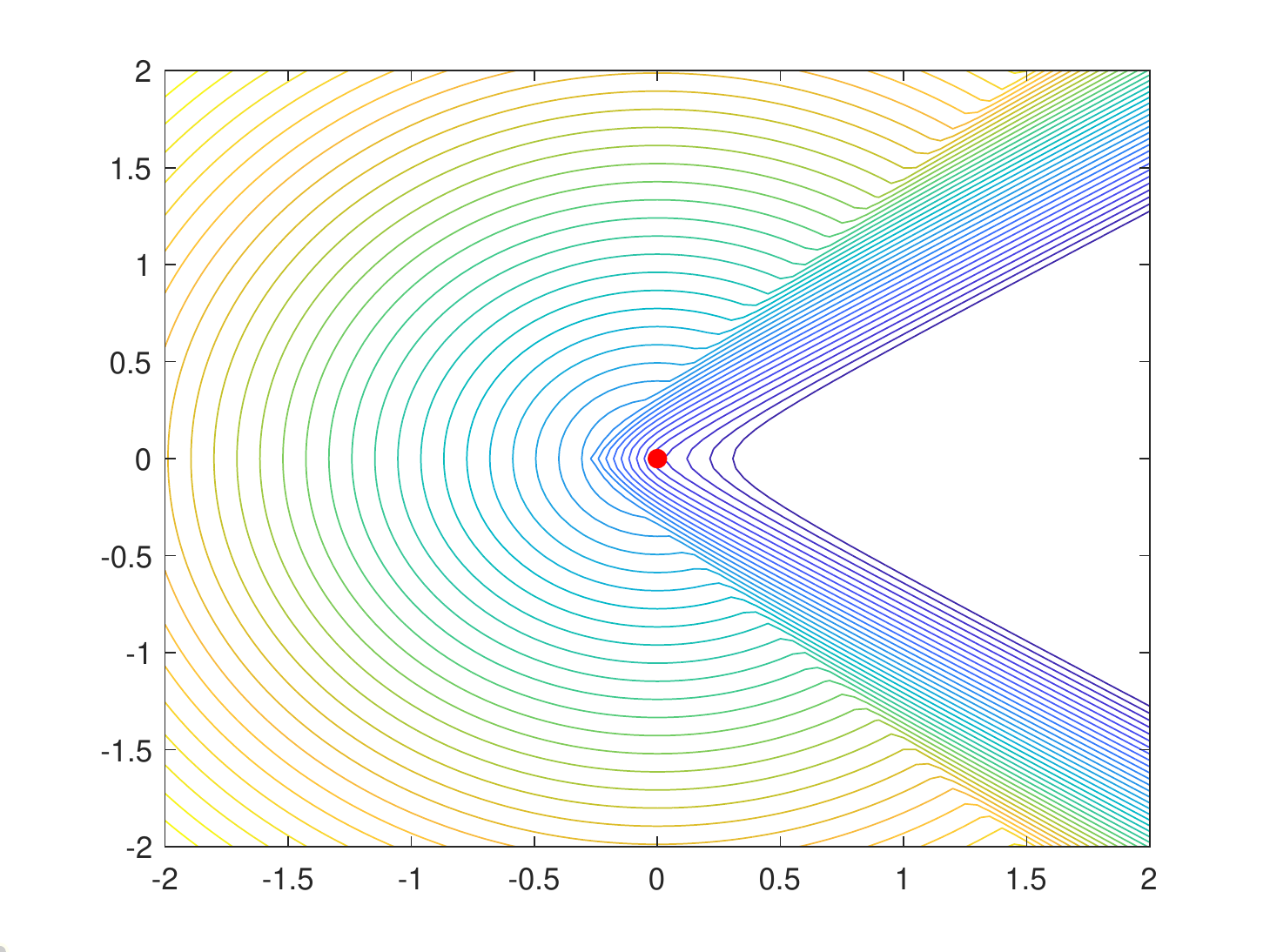}
	\caption{Mesh and Contour plot of the function $\bx\mapsto \max\{-1,g_{\bw}(\bx)\}$ on $\reals^2$, where $\bw=(0.3,0)$ and $g_{\bw}(\cdot)$ is defined in \lemref{lem:channel} (as part of the proof of \thmref{thm:main}). The origin is marked with a red dot. Best viewed in color.}
	\label{fig:channel}
\end{figure}

The formal proof of the theorem appears in the next subsection, but can be informally described as follows: First, we construct a Lipschitz function on $\reals^d$, specified by a small vector $\bw$, which resembles the norm function $\bx\mapsto\norm{\bx}$ in ``most'' of $\reals^d$, but with a ``channel'' leading away from a neighborhood of the origin in the direction of $\bw$, and reaching a completely flat region (see \figref{fig:channel}). We emphasize that the graphical illustration is a bit misleading due to the low dimension: In high dimensions, the ``channel'' and flat region contain a vanishingly small portion of $\reals^d$. This function has the property of having $\epsilon$-stationary points only in the flat region far away from the origin, in the direction of $\bw$, even though the function appears in most places like the norm function $\bx\mapsto \norm{\bx}$ independent of $\bw$. As a result, any oracle-based algorithm, that doesn't happen to hit the vanishingly small region where the function differs from $\bx\mapsto\norm{\bx}$, receives no information about $\bw$, and thus cannot determine where the $\epsilon$-stationary points lie. As a result, such an algorithm cannot return near-approximately-stationary points. 

Unfortunately, the construction described so far does not work as-is, since the algorithm can always query sufficiently close to the origin (closer than roughly $\norm{\bw}$), where the gradients do provide information about $\bw$. To prevent this, we compose the function with an algorithm-dependent affine mapping, which doesn't significantly change the function's properties, but ensures that the algorithm can never get too close to the (mapped) origin. We show that such an affine mapping must exist, based on standard oracle complexity results, which imply that oracle-based algorithms as above cannot get too close to the minimum of a generic convex quadratic function with a bounded number of queries. Using this mapping, and the useful property that $\bw$ can be chosen arbitrarily small, leads to our theorem. The full details appear in the following subsection.

\begin{remark}[Extension to higher-order algorithms]
Our proof approach is quite flexible, in the sense that for any algorithm, we really only need some function  which cannot be optimized to arbitrarily high accuracy, composed with a ``channel'' construction as above. Since functions of this type also exist for algorithms employing higher-order derivatives beyond gradients \citep{arjevani2019oracle}, it is unlikely that such higher-order algorithms will circumvent our impossibility result.
\end{remark}

\subsection{Proof of \thmref{thm:main}}\label{subsec:proof}

We begin by stating the following theorem, which follows from well-known results in oracle complexity (see \citet{nesterov2018lectures,nemirovskyyudin1983}):
\begin{theorem}\label{thm:hardquad}
	For any $T>1$, any algorithm in $\Acal_{det}\cup \Acal_{span}$ and any dimension $d\geq 2T$, there is a vector $\bx^*\in \reals^d$ (where $\norm{\bx^*}\leq \frac{1}{2}$) and a positive definite matrix $M\in \reals^{d\times d}$ (with minimal and maximal eigenvalues satisfying $\frac{1}{2}\leq \lambda_{\min}(M)\leq \lambda_{\max}(M)\leq 1$), such that the iterates $\bx_1,\ldots,\bx_T$ produced by the algorithm  when ran on the strictly convex quadratic function $f(\bx):=(\bx-\bx^*)^\top M(\bx-\bx^*)$ satisfy 
	\[
	\min_{t\in \{1,\ldots,T\}}\norm{\bx_t-\bx^*}~\geq~\exp(-T)~.
	\]
\end{theorem}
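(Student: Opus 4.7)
I would derive Theorem~\ref{thm:hardquad} from the classical first-order oracle-complexity lower bound for smooth strongly convex quadratic minimization. The argument has two parts: a Krylov/Chebyshev bound handling the linear-span class $\Acal_{span}$, and a resisting-oracle reduction that lifts the conclusion to the general deterministic class $\Acal_{det}$.

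The core tool I would invoke is the standard hard-instance construction (e.g.\ Theorem~2.1.13 in \citet{nesterov2018lectures}; see also \citet{nemirovskyyudin1983}): for every $n \geq 1$ there exist a symmetric tridiagonal $A_0 \in \reals^{n \times n}$ with $\lambda_{\min}(A_0) \geq \tfrac{1}{2}$ and $\lambda_{\max}(A_0) \leq 1$, together with a vector $\by^* \in \reals^n$ satisfying $\norm{\by^*} \leq \tfrac{1}{2}$, such that the minimizer of $(\by - \by^*)^\top A_0 (\by - \by^*)$ is at Euclidean distance at least $\exp(-T)$ from every element of the Krylov subspace $\mathcal{K}_t := \text{span}\{A_0\by^*, A_0^2 \by^*, \ldots, A_0^{t} \by^*\}$ whenever $t \leq T$ and $n \geq 2T$. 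This is the Chebyshev-polynomial bound, freely weakened to the cleaner $\exp(-T)$ rate.

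For the class $\Acal_{span}$, I would set $M := A_0$ and $\bx^* := \by^*$ in dimension $d$, extending $A_0$ by the identity on any excess directions. Since $\bx_1 = \mathbf{0}$, the first gradient is $\bg_1 = -2M\bx^*$, and an induction shows $\bx_t \in \mathcal{K}_{t-1}$ for every $t$; applying the Krylov bound then yields $\min_{1 \leq t \leq T} \norm{\bx_t - \bx^*} \geq \exp(-T)$.

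For the class $\Acal_{det}$, the iterates need not lie in any Krylov subspace, so I would use a resisting-oracle argument. Given a deterministic algorithm $\mathcal{A}$, I would build an orthonormal basis $\bv_1, \ldots, \bv_d$ adaptively as $\mathcal{A}$ queries the oracle: after the $t$-th query, the first $2t$ basis vectors are chosen to span all observed queries $\bx_1, \ldots, \bx_t$ and all simulated oracle responses $\bg_1, \ldots, \bg_t$, while $M$ and $\bx^*$ remain uncommitted on the orthogonal complement. The hypothesis $d \geq 2T$ guarantees that fresh orthonormal directions are available at every step; after the $T$-th query I commit to $M = U^\top A_0' U$ and $\bx^* = U^\top \by^*$, where $U$ is the orthogonal change of basis and $A_0'$ is $A_0$ padded by the identity on the complement. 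Rotational invariance preserves both the spectrum bounds on $M$ and the norm bound on $\bx^*$, and in the $\bv_i$-basis the problem collapses to the span setting, so the same $\exp(-T)$ lower bound applies.

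The main obstacle is the bookkeeping in the deterministic step: I have to check that the adaptively chosen oracle responses are globally consistent with one final quadratic, that the eigenvalue and norm bounds transfer through the rotation, and that the dimension budget is never exhausted. All three are standard consequences of the resisting-oracle technique combined with the rotational invariance of quadratics; the requirement $d \geq 2T$ is precisely sharp because each of the $T$ queries can contribute up to two fresh orthonormal directions (the query point itself and the returned gradient).
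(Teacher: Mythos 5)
Your proposal follows essentially the same two-part structure as the paper: a span/Krylov argument for $\Acal_{span}$ and a resisting-oracle reduction for $\Acal_{det}$, both built on the classical tridiagonal hard quadratic of \citet{nesterov2018lectures,nemirovskyyudin1983}. The only material difference is one of exposition: you invoke the Krylov/Chebyshev distance bound as a black box, whereas the paper's appendix proof is self-contained --- it writes down the explicit tridiagonal $M=\frac{1}{8}(A+4I)$ and ``chain'' quadratic, solves for the minimizer $\bx^*=(q,q^2,\ldots,q^T,0,\ldots,0)$ in closed form, and extracts the lower bound from the single coordinate $x_T^*=q^T$ rather than from a Chebyshev-polynomial argument (which avoids any need to translate between function-value and distance-to-optimum versions of the cited theorem). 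Your resisting-oracle bookkeeping for $\Acal_{det}$ matches the paper's: the paper adaptively picks orthonormal $\bu_1,\ldots,\bu_T$ with $\bu_t$ orthogonal to $\{\bu_i,\bx_i\}_{i<t}\cup\{\bx_t\}$, committing the rotation only after all $T$ queries, and uses the chain structure to guarantee that the already-returned values and gradients at $\bx_1,\ldots,\bx_t$ are unaffected by the not-yet-chosen $\bu_{t+1},\ldots$; this is exactly the consistency condition you flag as ``the main obstacle,'' and it does check out for the reason you anticipate. One small caution on your statement of the core tool: be careful that the standard citation gives a lower bound in terms of $f(\bx_t)-f^*$ under a span restriction, so if you want the clean ``$\by^*$ is at distance $\geq \exp(-T)$ from $\mathcal{K}_t$'' formulation you should either prove that reformulation (as the paper effectively does via the coordinate of $\bx^*$) or cite a source that states it in exactly that form.
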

For completeness, we provide a self-contained proof in Appendix \ref{app:oracle}. 
Basically, the theorem states that for any algorithm in $\Acal_{det}\cup \Acal_{span}$, there is a relatively well-conditioned\footnote{In the sense that $\lambda_{\max}(M)/\lambda_{\min}(M)\leq 2$.} but still ``hard'' strictly convex quadratic function, whose minimum cannot be detected with accuracy better than $\exp(-\Omega(T))$. 

\begin{remark}[Extension to any gradient-based algorithm]\label{remark:anyalg}
	Up to the constants, a lower bound as in \thmref{thm:hardquad} is widely considered to hold (with high-probability) for \emph{all} oracle-based algorithms, not just for deterministic or linear-span algorithms (see  \citep{nemirovskyyudin1983,simchowitz2018randomized}). In that case, our \thmref{thm:main} can be easily extended to apply to all oracle-based algorithms which utilize function values and gradients, since the only point in the proof where we really need to restrict the algorithm class is in \thmref{thm:hardquad}. Unfortunately, we are not aware of a result in the literature which quite states this, explicitly and in the required form. For example, there are algorithm-independent lower bounds which rely on non-quadratic functions \citep{woodworth2017lower}, or apply to quadratics, but not in a regime where $\lambda_{\max}(M)/\lambda_{\min}(M)$ is a constant as in our case \citep{simchowitz2018randomized}. 
\end{remark}

Given the theorem, our first step will be to reduce it to a hardness result for optimizing convex Lipschitz functions of the form $\bx\mapsto \norm{M^{1/2}(\bx-\bx^*)}$:
\begin{lemma}\label{lem:hardf}
	For any algorithm in $\Acal_{det}\cup \Acal_{span}$, any $T>1$ and any dimension $d\geq 2T$, there is a vector $\bx^*\in \reals^d$ (where $\norm{\bx^*}\leq \frac{1}{2}$) and a positive definite matrix $M\in \reals^{d\times d}$ (with $\frac{1}{2}\leq \lambda_{\min}(M)\leq \lambda_{\max}(M)\leq 1$), such that the convex function 
	\[
	\tilde{f}(\bx)~:=~\norm{M^{1/2}(\bx-\bx^*)}
	\] 
	satisfies the following:
	\begin{itemize}
		\item $\tilde{f}(\cdot)$ is $\frac{1}{\sqrt{2}}$-Lipschitz, and $\tilde{f}(\mathbf{0})\leq \frac{1}{2}$.
		\item If we run the algorithm on $\tilde{f}(\cdot)$, then $\min_{t\in \{1,\ldots,T\}}\norm{\bx_t-\bx^*}\geq \exp(-T)$.
	\end{itemize}
\end{lemma}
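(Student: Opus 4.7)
The plan is to reduce \lemref{lem:hardf} to \thmref{thm:hardquad} by exploiting the fact that $\tilde{f}(\bx)=\sqrt{f(\bx)}$, where $f(\bx):=(\bx-\bx^*)^\top M(\bx-\bx^*)$ is the hard quadratic from \thmref{thm:hardquad}. The crucial observation is that for every $\bx\neq\bx^*$,
\[
\nabla\tilde{f}(\bx) ~=~ \frac{M(\bx-\bx^*)}{\|M^{1/2}(\bx-\bx^*)\|} ~=~ \frac{1}{2\sqrt{f(\bx)}}\,\nabla f(\bx),
\]
so $\nabla\tilde{f}(\bx)$ is a positive scalar multiple of $\nabla f(\bx)$, and either of the oracle responses $(f(\bx),\nabla f(\bx))$ and $(\tilde{f}(\bx),\nabla\tilde{f}(\bx))$ can be reconstructed from the other using only the single scalar $f(\bx)$. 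This correspondence is what will let me transport an algorithm between $f$ and $\tilde{f}$.

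The analytic conditions on $\tilde{f}$ are routine. Since $\tilde{f}$ is the composition of the affine map $\bx\mapsto M^{1/2}(\bx-\bx^*)$ with the Euclidean norm, its Lipschitz constant is $\|M^{1/2}\|_{\mathrm{op}}=\sqrt{\lambda_{\max}(M)}$, controlled by the eigenvalue bound in \thmref{thm:hardquad}; similarly $\tilde{f}(\mathbf{0})=\|M^{1/2}\bx^*\|\leq\sqrt{\lambda_{\max}(M)}\,\|\bx^*\|\leq 1/2$, using $\|\bx^*\|\leq 1/2$ and $\lambda_{\max}(M)\leq 1$ from the same theorem.

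For the reduction, given any $\Acal\in\Acal_{det}\cup\Acal_{span}$ that is intended to run on $\tilde{f}$, I would build an algorithm $\Acal'$ that interacts with the $f$-oracle but internally simulates $\Acal$: whenever $\Acal$ would query at $\bx_t$, $\Acal'$ queries the $f$-oracle at the same $\bx_t$, recovers $\tilde{f}(\bx_t)=\sqrt{f(\bx_t)}$ and $\nabla\tilde{f}(\bx_t)=\nabla f(\bx_t)/(2\sqrt{f(\bx_t)})$ via the identity above, feeds these to $\Acal$, and uses $\Acal$'s next output as its own next query. By construction, $\Acal$'s iterates on $\tilde{f}$ and $\Acal'$'s iterates on $f$ are identical. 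Moreover $\Acal'$ lies in the same class as $\Acal$: determinism is inherited directly, and for the linear-span case, since each $\nabla\tilde{f}(\bx_i)$ is a positive scalar multiple of $\nabla f(\bx_i)$, the two gradient sequences span the same subspace at every round, so the span condition on the iterates is preserved verbatim.

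Applying \thmref{thm:hardquad} to $\Acal'$ then produces $(\bx^*,M)$ such that $\Acal'$'s iterates on $f$, hence $\Acal$'s iterates on $\tilde{f}$, satisfy $\min_{t}\|\bx_t-\bx^*\|\geq\exp(-T)$, which is exactly the conclusion of \lemref{lem:hardf}. The one point that needs attention is that the gradient-conversion formula has a singularity at $\bx=\bx^*$, but this is precisely where the conclusion of \thmref{thm:hardquad} saves us: all iterates stay at distance $\geq\exp(-T)>0$ from $\bx^*$, so the simulation is never evaluated at the singular point. The main obstacle, in my view, is the linear-span closure step, which has to be checked carefully; it reduces cleanly to the benign fact that $f\mapsto\sqrt{f}$ acts on each gradient only by positive rescaling, preserving the span.
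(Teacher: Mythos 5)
Your proposal is correct and follows essentially the same reduction as the paper: simulate the $\tilde f$-oracle from the $f$-oracle via $\tilde f=\sqrt{f}$ and the rescaled gradient, observe that determinism and the linear-span property are preserved (since each $\nabla\tilde f(\bx_i)$ is a positive multiple of $\nabla f(\bx_i)$), and then invoke \thmref{thm:hardquad}; the paper merely phrases this contrapositively. One small fix: the simulating algorithm $A'$ must be well-defined \emph{before} \thmref{thm:hardquad} is applied, so you cannot rely a posteriori on the iterates avoiding $\bx^*$ to dodge the singularity of $\nabla f(\bx)/(2\sqrt{f(\bx)})$; instead just hard-code the response $(0,\mathbf{0})$ when $f(\bx)=0$, which is legitimate since $\mathbf{0}\in\partial\tilde f(\bx^*)$ — this is exactly what the paper does. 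Also note your (correct) Lipschitz computation yields $\sqrt{\lambda_{\max}(M)}\le 1$, whereas the lemma states $\tfrac{1}{\sqrt 2}$; the paper's derivation carries a spurious factor of $2$ in the denominator of $\nabla\tilde f$, but the precise constant is immaterial downstream.
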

\begin{proof}
	We will start with the second bullet. Fix some algorithm $A$ in $\Acal_{det}\cup \Acal_{span}$, and assume by contradiction that for any $\bx^*,M$ satisfying the conditions in the lemma, the algorithm runs on $\tilde{f}(\cdot)$ and produces iterates such that $\min_{t\in \{1,\ldots,T\}}\norm{\bx_t-\bx^*}<\exp(-T)$ (either deterministically if the algorithm is deterministic, or for some realization of its random coin flips, if it is randomized). But then, we argue that given access to gradients and values of $f(\bx):=\tilde{f}^2(\bx)=(\bx-\bx^*)^\top M (\bx-\bx^*)$, we can use $A$ to specify \emph{another} algorithm in $\Acal_{det}\cup \Acal_{span}$ that runs on $f(\cdot)$ and produces points $\bx_1,\ldots,\bx_T$ such that $\min_{t\in \{1,\ldots,T\}}\norm{\bx_t-\bx^*}<\exp(-T)$, contradicting \thmref{thm:hardquad}. To see why, note that given access to an oracle returning values and gradients of $f(\cdot)$ at $\bx$, we can simulate an oracle returning gradients and values of $\tilde{f}(\cdot)$ at $\bx$ via the easily-verified formulaes
	\[
	\tilde{f}(\bx)=\sqrt{f(\bx)}~~~\text{and}~~~\nabla \tilde{f}(\bx)= \frac{1}{2\sqrt{f(\bx)}}\nabla f(\bx)
	\]
	(and for $\bx=\bx^*$ where $\tilde{f}(\cdot)$ is not differentiable, we can just return the value $0$ and the generalized gradient $\mathbf{0}$). We then feed the responses of this simulated oracle to $A$, and get the resulting $\bx_1,\ldots,\bx_T$. This give us a new algorithm $A'$, which is easily verified to be in $\Acal_{det}\cup \Acal_{span}$ if the original algorithm $A$ is in $\Acal_{det}\cup \Acal_{span}$. 
	
	It remains to prove the second bullet in the lemma. First, we have $\tilde{f}(\mathbf{0})=\norm{M^{1/2}\bx^*}\leq \sqrt{\norm{M}}\norm{\bx^*}\leq \frac{1}{2}$. Second, we note that for any $\bx\neq \bx^*$, $\tilde{f}(\cdot)$ is differentiable and satisfies
	\[
	\norm{\nabla \tilde{f}(\bx)}~=~
	\frac{\norm{M(\bx-\bx^*)}}{2\norm{M^{1/2}(\bx-\bx^*)}}
	~\leq~ \frac{\lambda_{\max}(M)\cdot \norm{\bx-\bx^*}}{2\sqrt{\lambda_{\min}(M)}\cdot \norm{\bx-\bx^*}}~=~
	\frac{\lambda_{\max}(M)}{2\sqrt{\lambda_{\min}(M)}}~,
	\] 
	which by the conditions on $M$, is at most $\frac{1}{2\sqrt{1/2}}=\frac{1}{\sqrt{2}}$.
\end{proof}

Next, we define a function $g(\cdot)$ with two properties: It is identical to $\bx\mapsto\norm{\bx}$ in parts of $\reals^d$ (in fact, as we will see later, in ``almost'' all of $\reals^d$), yet unlike the function $\bx\mapsto \norm{\bx}$, it has no stationary points, or even $\epsilon$-stationary points. 
\begin{lemma}\label{lem:channel}
	Fix some vector $\bw\neq \mathbf{0}$ in $\reals^d$, and define the function
	\[
	g_{\bw}(\bx) ~:=~ \norm{\bx}-\left[4\bar{\bw}^\top (\bx+\bw)-2\norm{\bx+\bw}\right]_+,
	\]
	where $\bar{\bu}:=\bu/\norm{\bu}$ for any vector $\bu$, and $[v]_+:=\max\{v,0\}$. Then $g_{\bw}(\cdot)$ is $7$-Lipschitz, and has no $\epsilon$-stationary points for any $\epsilon<\frac{1}{\sqrt{2}}$~. 
\end{lemma}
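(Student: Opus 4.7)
The Lipschitz bound is immediate: each of $\bx\mapsto\norm{\bx}$, $\bx\mapsto\bar\bw^\top(\bx+\bw)$, and $\bx\mapsto\norm{\bx+\bw}$ is $1$-Lipschitz (the middle one because $\norm{\bar\bw}=1$), so the argument of $[\cdot]_+$ is $(4+2)=6$-Lipschitz; since $[\cdot]_+$ is itself $1$-Lipschitz the composition is $6$-Lipschitz, and subtracting this from the $1$-Lipschitz $\norm{\bx}$ yields a $7$-Lipschitz function.

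For the stationarity claim I will show the stronger conclusion that every element of $\partial g_\bw(\bx)$ has norm at least $1>1/\sqrt{2}$. Write $\bu:=\bx/\norm{\bx}$ and $\bv:=(\bx+\bw)/\norm{\bx+\bw}$ whenever defined, and split $\reals^d$ into the \emph{active} set $A:=\{\bx:\bv^\top\bar\bw>1/2\}$ (which is the same as $4\bar\bw^\top(\bx+\bw)>2\norm{\bx+\bw}$), the inactive set $I$, and their common boundary $B$. Note that $\mathbf{0}\in A$ (since $[\cdot]_+$ at $\mathbf{0}$ equals $2\norm{\bw}>0$) and $-\bw\in\bar I$. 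In the inactive smooth region $g_\bw$ coincides with $\norm{\bx}$, so the gradient is $\bu$ with norm $1$. In the active smooth region ($\bx\neq\mathbf{0},-\bw$) the gradient is $\bu-4\bar\bw+2\bv$, and projecting onto $\bar\bw$ gives
\[
\bar\bw^\top(\bu-4\bar\bw+2\bv)=\bu^\top\bar\bw+2\bv^\top\bar\bw-4\le 1+2-4=-1,
\]
forcing the gradient norm to be at least $1$.

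It remains to handle the non-smooth points $\mathbf{0}$, $-\bw$, and the smooth part of $B$. At $\mathbf{0}$, which is strictly interior to $A$, the only non-smoothness comes from $\norm{\bx}$, so $\partial g_\bw(\mathbf{0})=\bar B(\mathbf{0},1)-2\bar\bw=\bar B(-2\bar\bw,1)$, a ball of minimum norm $1$. At $-\bw$, taking Clarke limits along the active side ($\bv$ ranges over unit vectors with $\bar\bw^\top\bv\ge 1/2$) and the inactive side (which yields only $-\bar\bw$) shows that $\partial g_\bw(-\bw)$ is contained in the convex hull of $\{-\bar\bw\}\cup\{-5\bar\bw+2\bv:\norm{\bv}=1,\ \bar\bw^\top\bv\ge 1/2\}$; every extreme point has projection $\le -1$ onto $\bar\bw$, hence so does every convex combination, forcing norm at least $1$.

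The main obstacle, and the only delicate step, is the smooth part of $B$: here $\partial g_\bw(\bx)$ is the line segment $\{\bu-t\bd:t\in[0,1]\}$ with $\bd:=4\bar\bw-2\bv$, and $\norm{\bd}^2=20-8\bar\bw^\top\bv=16$, wait $\norm{\bd}^2=16+4-16\cdot\tfrac12=12$, so the segment has length $2\sqrt{3}$ and could a priori pass close to the origin. I would rule this out by a direct geometric calculation: parameterise $\bx=s\bv-\bw$ with $s=\norm{\bx+\bw}>0$; then using $\bv^\top\bar\bw=1/2$ and $\bw^\top\bv=\norm{\bw}/2$ one obtains
\[
\bu^\top\bd=\frac{4(s/2-\norm{\bw})-2(s-\norm{\bw}/2)}{\norm{\bx}}=\frac{-3\norm{\bw}}{\norm{\bx}}<0.
\]
Consequently the quadratic $t\mapsto\norm{\bu-t\bd}^2=1-2t\,\bu^\top\bd+12t^2$ has derivative $-2\bu^\top\bd+24t>0$ for $t\ge 0$, so it is strictly increasing on $[0,1]$ and attains its minimum $\norm{\bu}^2=1$ at $t=0$. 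Combining all cases, every element of $\partial g_\bw(\bx)$ has norm at least $1$, which proves the claim.
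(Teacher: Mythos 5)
Your proof is correct, and it actually establishes the \emph{stronger} claim that every Clarke subgradient of $g_{\bw}$ has norm at least $1$ (the lemma only asks to rule out $\epsilon<1/\sqrt{2}$). The overall case decomposition — Lipschitz bound by composition, then the interiors of the two smooth pieces, the point $\mathbf{0}$, the point $-\bw$, and the smooth part of the boundary cone $\{\bar\bw^\top(\overline{\bx+\bw})=\tfrac12\}$ — is the same as the paper's. The genuine difference is in how the boundary cone is handled. The paper projects the subgradient segment onto the orthogonal complement of $\bw$, getting a lower bound of $\sqrt{1-(\bar\bw^\top\bar\bx)^2}$, and then needs a sign-split on $\bar\bw^\top\bar\bx$ to turn this into a contradiction for $\epsilon<1/\sqrt{2}$. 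You instead parameterize the boundary ray as $\bx=s\bv-\bw$, compute $\bar\bx^\top(4\bar\bw-2\bv)=-3\norm{\bw}/\norm{\bx}<0$ exactly, and observe that the quadratic $t\mapsto\norm{\bar\bx-t(4\bar\bw-2\bv)}^2=1-2t\,\bar\bx^\top\bd+12t^2$ is therefore strictly increasing on $[0,1]$, so the minimum of the segment is the endpoint $\bar\bx$, of norm exactly $1$. This is both tighter (bound $1$ rather than $1/\sqrt{2}$) and eliminates the case analysis; geometrically it says the segment of subgradients begins at the unit vector $\bar\bx$ and moves monotonically \emph{away} from the origin. Your writeup has a self-corrected arithmetic slip when first computing $\norm{\bd}^2$, but the value $12$ that you actually use is right, and the rest of the argument is sound.
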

\begin{proof}
	In the proof, we will drop the $\bw$ subscript and refer to $g_{\bw}(\cdot)$ as $g(\cdot)$.
	
	The functions $\bx\mapsto \norm{\bx}$, $\bx\mapsto 4\bar{\bw}^\top (\bx+\bw)$, $\bx\mapsto2 \norm{\bx+\bw}$ and $x\mapsto \max\{0,x\}$ are respectively $1$-Lipschitz, $4$-Lipschitz, $2$-Lipschitz and $1$-Lipschitz, from which it immediately follows that $g(\cdot)$ is $1+4+2=7$ Lipschitz. Thus, it only remains to show that $g(\cdot)$ has no $\epsilon$-stationary points. 
	
	It is easily seen that the function $g(\cdot)$ is not differentiable at only $3$ possible regions: (1) $\bx=\mathbf{0}$, (2) $\bx=-\bw$, and (3) $\{\bx:4\bar{\bw}^\top (\bx+\bw)-2\norm{\bx+\bw}=0\}$ (or equivalently, $\{\bx:\bar{\bw}^\top(\overline{\bx+\bw})=\frac{1}{2}\}$ if we exclude $\bx=-\bw$), which are all measure-zero sets in $\reals^d$. At any other point, $g(\cdot)$ is differentiable and the gradient satisfies
	\[
	\nabla g(\bx) = \bar{\bx}-\mathbf{1}_{\bar{\bw}^\top(\overline{\bx+\bw})> \frac{1}{2}}\cdot\left(4\bar{\bw}-2(\overline{\bx+\bw})\right)~.
	\]
	Moreover, at those differentiable points, if $\bar{\bw}^\top(\overline{\bx+\bw})< \frac{1}{2}$ then
	\[
	\norm{\nabla g(\bx)}~=~\norm{\bar{\bx}}~=~1~,
	\] 
	and if $\bar{\bw}^\top(\overline{\bx+\bw})> \frac{1}{2}$, then by the triangle inequality,
	\begin{align*}
	\norm{\nabla g(\bx)}~&=~\norm{\bar{\bx}-\left(4\bar{\bw}-2(\overline{\bx+\bw})\right)}
	~=~\norm{4\bar{\bw}-2(\overline{\bx+\bw})-\bar{\bx}}\\
	&\geq~
	4\norm{\bar{\bw}}-2\norm{\overline{\bx+\bw}}-\norm{\bar{\bx}}~=~
	4-2-1 = 1~.
	\end{align*}
	Thus, no differentiable point of $g$ is even $0.99$-stationary. It remains to show that even the non-differentiable points of $g$ are not $\epsilon$-stationary for any $\epsilon < \frac{1}{\sqrt{2}}$. To do so, we will use the facts that $\partial(g_1+g_2)\subseteq \partial g_1 + \partial g_2$, and that if $g_1$ is univariate, $\partial (g_1\circ g_2)(\bx)\subseteq \text{conv}\{r_1 \br_2:r_1\in \partial g_1(g_2(\bx)),\br_2\in \partial g_2(\bx)\}$ (see \citet{clarke1990optimization}). 
	\begin{itemize}
		\item At $\bx=\mathbf{0}$, we have 
		\[
		\partial g(\bx)~\subseteq~ \text{conv}\{\bu-2\bar{\bw}~:~\norm{\bu}\leq 1\}~=~
		\{\bu-2\bar{\bw}~:~\norm{\bu}\leq 1\}~.
		\]
		Any element in this set has a norm of $\norm{\bu-2\bar{\bw}}=\norm{2\bar{\bw}-\bu}\geq 2\norm{\bar{\bw}}-\norm{\bu}\geq 2-1=1$ by the triangle inequality. Thus, $\bx=0$ is not $\epsilon$-stationary for any $\epsilon<1$.
		\item At $\bx=-\bw$, we have
		\begin{align*}
		\partial g(\bx)~&\subseteq~ \text{conv}\{-\bar{\bw}-v\cdot(4\bar{\bw}-2\bu):v\in [0,1],\norm{\bu}\leq 1\}\\
		&=~ \text{conv}\{2v\bu-(1+4v)\bar{\bw}:v\in [0,1],\norm{\bu}\leq 1\}~.
		\end{align*}
		For any element in the set $\{2v\bu-(1+4v)\bar{\bw}:v\in [0,1],\norm{\bu}\leq 1\}$ (corresponding to some $v,\bu$), its inner product with $-\bar{\bw}$ is
		\[
		-2v\bar{\bw}^\top\bu+(1+4v)\geq -2v+(1+4v)\geq 1~.
		\]
		Thus, any element in the convex hull of this set, which contains $\partial g(\bx)$, has an inner product of at least $1$ with $-\bar{\bw}$. Since $-\bar{\bw}$ is a unit vector, it follows that the norm of any element in $\partial g(\bx)$ is at least $1$, so this point is not $\epsilon$-stationary for any $\epsilon<1$. 
		\item At any $\bx$ in the set $\{\bx:\bar{\bw}^\top(\overline{\bx+\bw})=\frac{1}{2}\}\setminus\{\mathbf{0},-\bw\}$, we have
		\begin{align}
		\partial g(\bx) ~&\subseteq~ \text{conv}\left\{\bar{\bx}-v\cdot\left(4\bar{\bw}-2(\overline{\bx+\bw})\right)~:~v\in [0,1]\right\}\notag\\
		&=~\left\{\bar{\bx}-v\cdot\left(4\bar{\bw}-2(\overline{\bx+\bw})\right)~:~v\in [0,1]\right\}\label{eq:dg0}\\
		&=~
		\left\{\left(\frac{1}{\norm{\bx}}+\frac{2v}{\norm{\bx+\bw}}\right)\bx
		-\left(\frac{4v}{\norm{\bw}}-\frac{2v}{\norm{\bx+\bw}}\right)\bw~:~v\in [0,1]\right\}~.\label{eq:dg}
		\end{align}
		Let $\bx=\bx_{|}+\bx_{\perp}$, where $\bx_{\perp} = (I-\bar{\bw}\bar{\bw}^\top)\bx$ is the component of $\bx$ orthogonal to $\bw$, and $\bx_|\in \text{span}(\bw)$. Thus, any element in $\partial g(\bx)$ can be written as
		\[
		\left(\frac{1}{\norm{\bx}}+\frac{2v}{\norm{\bx+\bw}}\right)\bx_{\perp}+a\cdot\bw
		\]
		for some scalar $a$. Since $\bw$ is orthogonal to $\bx_{\perp}$, the norm of this element is at least
		\[
		\left(\frac{1}{\norm{\bx}}+\frac{2v}{\norm{\bx+\bw}}\right)\norm{\bx_{\perp}}
		~\geq~
		\frac{1}{\norm{\bx}}\cdot\norm{\bx_{\perp}}~.
		\]
		Noting that
		\[
		\norm{\bx_{\perp}}^2=\bx^\top(I-\bar{\bw}\bar{\bw}^\top)^2 \bx= \bx^\top(I-\bar{\bw}\bar{\bw}^\top) \bx=\norm{\bx}^2-(\bar{\bw}^\top\bx)^2 =
		\norm{\bx}^2(1-(\bar{\bw}^\top\bar{\bx})^2)
		\]
		and plugging into the above, it follows that the norm is at least $\sqrt{(1-(\bar{\bw}^\top\bar{\bx})^2)}$.
		
		Now, let us suppose that there exists an element in $\partial g(\bx)$ with norm at most $\epsilon$. By the above, it follows that
		\begin{equation}
		\sqrt{(1-(\bar{\bw}^\top\bar{\bx})^2)}\leq \epsilon~.\label{eq:wx}
		\end{equation}
		However, we will show that for any $\epsilon<\frac{1}{\sqrt{2}}$, we must arrive at a contradiction, which implies that $\bx$ cannot be $\epsilon$-stationary for $\epsilon<\frac{1}{\sqrt{2}}$. To that end, let us consider two cases:
		\begin{itemize}
			\item If $\bar{\bw}^\top\bar{\bx}\leq 0$, then by \eqref{eq:wx}, we must have $\bar{\bw}^\top\bar{\bx}\leq -\sqrt{1-\epsilon^2}$. But then, for any $\bu\in \partial g(\bx)$, by \eqref{eq:dg0} and our assumption that $\bar{\bw}^\top(\overline{\bx+\bw})=\frac{1}{2}$,
			\[
			\bar{\bw}^\top\bu ~=~ \bar{\bw}^\top\bar{\bx}-v\cdot\left(4-2\cdot\frac{1}{2}\right)
			~\leq~ -\sqrt{1-\epsilon^2}-3v~\leq~ -\sqrt{1-\epsilon^2}.
			\]
			This implies that $\norm{\bu}\geq \sqrt{1-\epsilon^2}$ for any $\bu\in \partial g(\bx)$. Thus, if there was some $\bu\in \partial g(\bx)$ with norm at most $\epsilon$, we get that $\epsilon \geq \sqrt{1-\epsilon^2}$, which cannot hold if $\epsilon < \frac{1}{\sqrt{2}}$.
			\item If $\bar{\bw}^\top\bar{\bx}>0$, then by \eqref{eq:wx}, we have $\bar{\bw}^\top \bar{\bx}\geq \sqrt{1-\epsilon^2}$. Hence,
			\[
			\bar{\bw}^\top(\bx+\bw)~\geq~\norm{\bx}\sqrt{1-\epsilon^2}+\norm{\bw}~\geq~ (\norm{\bx}+\norm{\bw})\sqrt{1-\epsilon^2}~\geq~\norm{\bx+\bw}\sqrt{1-\epsilon^2}~.
			\]
			However, dividing both sides by $\norm{\bx+\bw}$, we get that $\bar{\bw}^\top(\overline{\bx+\bw})~\geq~\sqrt{1-\epsilon^2}$. If $\epsilon<\frac{1}{\sqrt{2}}$, it follows that $\bar{\bw}^\top(\overline{\bx+\bw})> \frac{1}{\sqrt{2}}$, which contradicts our assumption that $\bx$ satisfies $\bar{\bw}^\top (\overline{\bx+\bw})=\frac{1}{2}$. 
		\end{itemize}
	\end{itemize}
\end{proof}

\begin{remark}\label{remark:propstrongdetails}
	The function 
	\[
	\tilde{g}_{\bw}(\bx):=\max\{g_{\bw}(\mathbf{0})-1~,~g_{\bw}(\bx)\}~,
	\] 
	where $g_{\bw}(\cdot)$ is as defined in \lemref{lem:channel}, actually strengthens Proposition \ref{prop:stat} from the introduction: According to the lemma, $g_{\bw}(\cdot)$ is $7$-Lipschitz and has no $\epsilon$-stationary points for $\epsilon < 1/\sqrt{2}$. Therefore, it is easily verified that for any $\bw$, $\tilde{g}_{\bw}(\cdot)$ is $7$-Lipschitz, bounded from below, and any $\epsilon$-stationary point is at a distance of at least $1/7$ from the origin\footnote{The last point follows from the fact that if $\by$ is an $\epsilon$-stationary point of $\tilde{g}_{\bw}(\cdot)$, then we can find an arbitrarily close point $\bx$ such that $\tilde{g}_{\bw}(\bx)\neq g_{\bw}(\bx)$, hence $g_{\bw}(\bx)< g_{\bw}(\mathbf{0})-1$, and as a result $g_{\bw}(\mathbf{0})-g_{\bw}(\bx)>1$. But $g_{\bw}(\cdot)$ is $7$-Lipschitz, hence $\norm{\bx}> 1/7$, and therefore $\norm{\by}\geq 7$.}. However, we also claim that the origin is a $(\delta,0)$-stationary point for any $\delta\in \left(0,1/7\right)$. To see this, note first that for such $\delta$, by the Lipschitz property of $g_{\bw}(\cdot)$, we have $\tilde{g}_{\bw}(\bx)=g_{\bw}(\bx)$ in a $\delta$-neighborhood of the origin. Fix any $\bw$ such that $\norm{\bw}=\frac{\delta}{2}$, and let $\bv$ be any vector of norm $\delta$ orthogonal to $\bw$. It is easily verified that $\bar{\bw}^\top(\overline{\bv+\bw})<\frac{1}{2}$, in which case
\[
\nabla \tilde{g}_{\bw}(\bv)~=~\nabla g_{\bw}(\bv) ~=~ \bar{\bv}~,
\]
and therefore $\frac{1}{2}\left(\nabla \tilde{g}_{\bw}(\bv)+\nabla \tilde{g}_{\bw}(-\bv)\right)=\mathbf{0}$. 
\end{remark}

\begin{lemma}
	Fix any algorithm in $\Acal_{det}\cup \Acal_{span}$, any $T>1$ and any $d\geq 2T$. Define the function
	\begin{align*}
	h_{\bw}(\bx)~&:=~ \max\{-1~,~g_{\bw}(M^{1/2}(\bx-\bx^*))\}\\
	&=~ \max\left\{-1,\norm{M^{1/2}(\bx-\bx^*)}-\left[4\bar{\bw}^\top (M^{1/2}(\bx-\bx^*)+\bw)-2\norm{M^{1/2}(\bx-\bx^*)+\bw}\right]_+\right\}~,
	\end{align*}
	where $M,\bw^*$ are as defined in \lemref{lem:hardf}, $g_{\bw}(\cdot)$ is as defined in \lemref{lem:channel}, and $\bw$ is a vector of norm $\frac{1}{300}\exp(-T)$ in $\reals^d$. Then:
	\begin{itemize}
		\item $h_{\bw}(\cdot)$ is $7$-Lipschitz, and satisfies  $h_{\bw}(\mathbf{0})-\inf_{\bx}h_{\bw}(\bx)\leq \frac{3}{2}$.
		\item Any $\epsilon$-stationary point $\bx$ of $h_{\bw}(\cdot)$ for $\epsilon<\frac{1}{2\sqrt{2}}$ satisfies $h_{\bw}(\bx)=-1$. 
		\item There exists a choice of $\bw$, such that if we run the algorithm on $h_{\bw}(\cdot)$, then with probability at least $1-T\exp(-d/18)$ over the algorithm's randomness (or deterministically if the algorithm is deterministic), the algorithm's iterates $\bx_1,\ldots,\bx_T$ satisfy
		$\min_{t\in \{1,\ldots,T\}}h_{\bw}(\bx_t)>0$.
	\end{itemize}
\end{lemma}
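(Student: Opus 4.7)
The plan is to tackle the three bullets in sequence, with the first two following from routine composition arguments and the third via a probabilistic-method/concentration argument that is the real content of the lemma.

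For the \emph{Lipschitz and gap bounds}, observe that $\bx\mapsto M^{1/2}(\bx-\bx^*)$ has operator norm $\sqrt{\lambda_{\max}(M)}\leq 1$, so composing with the $7$-Lipschitz $g_{\bw}$ from \lemref{lem:channel} yields a $7$-Lipschitz function, and truncating from below by $-1$ preserves this. The gap $h_{\bw}(\mathbf{0})-\inf h_{\bw}\leq 3/2$ follows from $\inf h_{\bw}\geq -1$ (by construction) and $h_{\bw}(\mathbf{0})\leq g_{\bw}(-M^{1/2}\bx^*)\leq \norm{M^{1/2}\bx^*}\leq \sqrt{\lambda_{\max}(M)}\norm{\bx^*}\leq 1/2$ (dropping the nonnegative subtracted $[\cdot]_+$ term gives the inequality). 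For the \emph{$\epsilon$-stationarity claim}, if $h_{\bw}(\bx)>-1$ then in a neighborhood $h_{\bw}$ coincides with $g_{\bw}(M^{1/2}(\cdot-\bx^*))$, so by the Clarke chain rule $\partial h_{\bw}(\bx)\subseteq M^{1/2}\partial g_{\bw}(M^{1/2}(\bx-\bx^*))$; since \lemref{lem:channel} forces $\norm{\bv}\geq 1/\sqrt{2}$ for all $\bv\in \partial g_{\bw}$, every element of $\partial h_{\bw}(\bx)$ has norm at least $\sqrt{\lambda_{\min}(M)}\cdot 1/\sqrt{2}\geq 1/2 > 1/(2\sqrt{2})$, so $\bx$ cannot be $\epsilon$-stationary for $\epsilon<1/(2\sqrt{2})$.

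For the \emph{high-probability bullet}, the strategy is to compare the algorithm's run on $h_{\bw}$ with its (hypothetical) run on the convex function $\tilde{f}(\bx):=\norm{M^{1/2}(\bx-\bx^*)}$ from \lemref{lem:hardf}. Outside the ``channel'' $C_{\bw}:=\{\bx:\bar{\bw}^\top\overline{M^{1/2}(\bx-\bx^*)+\bw}>1/2\}$, the inner term $g_{\bw}(\bz)$ reduces to $\norm{\bz}$ with gradient $\bar{\bz}$, and $\max\{-1,\cdot\}$ is inactive; hence the oracle responses for $h_{\bw}$ and $\tilde{f}$ agree on $\reals^d\setminus C_{\bw}$. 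Let $\tilde{\bx}_1,\ldots,\tilde{\bx}_T$ denote the iterates the algorithm would produce on $\tilde{f}$ (depending on its coins but not on $\bw$). A straightforward induction shows that as long as $\tilde{\bx}_1,\ldots,\tilde{\bx}_{t-1}\notin C_{\bw}$, the $t$-th iterate on $h_{\bw}$ equals $\tilde{\bx}_t$. On this good event, $h_{\bw}(\bx_t)=\tilde{f}(\tilde{\bx}_t)\geq \sqrt{\lambda_{\min}(M)}\exp(-T)>0$ by \lemref{lem:hardf}, so it remains to produce a $\bw$ with $\Pr[\exists t:\tilde{\bx}_t\in C_{\bw}]\leq T\exp(-d/18)$.

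To produce such a $\bw$, I would draw $\bar{\bw}$ uniformly from the unit sphere in $\reals^d$ (with $\norm{\bw}=\exp(-T)/300$ fixed) and apply the probabilistic method. Conditional on the algorithm's coins, $\tilde{\bv}_t:=M^{1/2}(\tilde{\bx}_t-\bx^*)$ is deterministic with $\norm{\tilde{\bv}_t}\geq \exp(-T)/\sqrt{2}$, so $\norm{\bw}/\norm{\tilde{\bv}_t}\leq \sqrt{2}/300$. A short triangle-inequality calculation then shows that the event $\bar{\bw}^\top\overline{\tilde{\bv}_t+\bw}>1/2$ implies $\bar{\bw}^\top\overline{\tilde{\bv}_t}>1/2-c_0$ for a small absolute constant $c_0$, and standard concentration on the unit sphere gives $\Pr_{\bar{\bw}}[\bar{\bw}^\top\overline{\tilde{\bv}_t}>1/2-c_0]\leq \exp(-d/18)$. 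A union bound over $t$ and averaging over $\bar{\bw}$ then yield a deterministic direction with the desired guarantee. The \emph{main obstacle} is this last passage: carefully controlling how the shift by $\bw$ distorts the direction from $\overline{\tilde{\bv}_t}$ to $\overline{\tilde{\bv}_t+\bw}$, and calibrating $\norm{\bw}=\exp(-T)/300$ so that the angular slack fits inside the margin needed for the $\exp(-d/18)$ bound, while remaining small enough to be compatible with the near-minimum guarantee $\norm{\tilde{\bx}_t-\bx^*}\geq \exp(-T)$ of \lemref{lem:hardf}.
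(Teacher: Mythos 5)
Your proposal is correct and follows essentially the same route as the paper: establish the Lipschitz/gap bounds by composition, use the Clarke chain rule together with \lemref{lem:channel} to show that any approximate stationary point must lie in the truncated region, and then couple the algorithm's run on $h_{\bw}$ with its run on $\tilde{f}$ outside the ``channel,'' choosing $\bw$ by the probabilistic method via sphere concentration. Two details are worth flagging.

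First, on bullet~2, your argument is clean and actually gives the slightly stronger threshold $\epsilon<1/2$ (since $\sqrt{\lambda_{\min}(M)}\cdot\frac{1}{\sqrt 2}\geq\frac12>\frac{1}{2\sqrt 2}$); the paper's bound of $\frac{1}{2\sqrt 2}$ is what survives after a looser bookkeeping, so you are in fact not losing anything here. Also note that it does not suffice that $h_{\bw}$ and $\tilde g$ agree at the point $\bx$: you need agreement in an open neighborhood to equate the Clarke subdifferentials, which follows from continuity of $\tilde g$, as you implicitly use.

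Second, on bullet~3, the paper proves the deterministic case separately by choosing $\bw$ orthogonal to the $T$ vectors $M^{1/2}(\bx_t^{\tilde f}-\bx^*)$ (possible since $d>T$), and only invokes the sphere-concentration/probabilistic-method argument for randomized algorithms. Your unified probabilistic-method argument recovers deterministic success only when $T\exp(-d/18)<1$; for small $d$ and $T$ (e.g.\ $T=2$, $d=4$) the bound is vacuous and one must fall back on an explicit orthogonal choice of $\bw$ to get the lemma's parenthetical ``or deterministically'' claim. This is a small omission, not a flaw in the idea: for a deterministic algorithm, taking $\bw$ orthogonal to the span of $\{\tilde{\bv}_t\}_{t\leq T}$ gives $\bar{\bw}^\top\overline{\tilde{\bv}_t+\bw}=\norm{\bw}/\norm{\tilde{\bv}_t+\bw}\leq\sqrt2/300\ll 1/2$, so the good event holds with probability one. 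Finally, your threshold calculation ($\bar{\bw}^\top\overline{\tilde{\bv}_t}>1/2-c_0$ with $c_0=\frac{3}{2}\norm{\bw}/\norm{\tilde{\bv}_t}\leq\sqrt2/200$) is in fact tighter than the paper's ($1/3$ after going through Eq.~(10)), so the constant $1/300$ has plenty of slack; your stated ``main obstacle'' does work out with room to spare.
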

\begin{proof}
	The Lipschitz bound follows from the facts that $z\mapsto \max\{-1,z\}$ is $1$-Lipschitz, $\bx\mapsto M^{1/2}(\bx-\bx^*)$ is $\norm{M^{1/2}}\leq 1$-Lipschitz, and that $g_{\bw}$ is $7$-Lipschitz by \lemref{lem:channel}. Moreover, we clearly have $\inf_{\bx}h_{\bw}(\bx)\geq -1$, and by definition of $h_{\bw}(\cdot)$ and \lemref{lem:hardf},
	\[
	h_{\bw}(\mathbf{0})~\leq~ \norm{-M^{1/2}\bx^*}=\tilde{f}(\mathbf{0})\leq \frac{1}{2}~.
	\]
	Combining the two observations establishes the first bullet in the lemma.
	
	As to the second bullet, let $\tilde{g}(\bx):=g_{\bw}(M^{1/2}(\bx-\bx^*))$ (so that $h_{\bw}(\bx)=\max\{-1,\tilde{g}(\bx)\}$). It is easily verified that $\bu\in\partial g_{\bw}(\bx)$ if and only if $M^{1/2}\bu\in \partial \tilde{g}(\bx+\bx^*)$. By \lemref{lem:channel}, $g_{\bw}$ has no $\epsilon$-stationary point for $\epsilon<\frac{1}{\sqrt{2}}$, which implies that $\tilde{g}(\bx)$ has no $\epsilon$-stationary points for any $\epsilon$ less than $\lambda_{\min}(M^{1/2})\frac{1}{\sqrt{2}}\geq \frac{1}{2\sqrt{2}}$. But since $h_{\bw}(\bx)=\max\{-1,\tilde{g}(\bx)\}$, it follows that any $\epsilon$-stationary points of $h_{\bw}(\cdot)$ must be arbitrarily close to the region where $h_{\bw}(\cdot)$ is different than $\tilde{g}(\cdot)$, namely where it takes a value of $-1$. Since $h_{\bw}(\cdot)$ is Lipschitz, it follows that its value is $-1$ at the $\epsilon$-stationary point as well.
	
	We now turn to establish the third bullet in the lemma.	
	A crucial observation here is that 
	\begin{equation}\label{eq:almosthard}
	h_{\bw}(\bx) ~=~ g_{\bw}(M^{1/2}(\bx-\bx^*)) ~=~ \tilde{f}(\bx)~~~~\forall \bx:\bar{\bw}^\top\left(\overline{M^{1/2}(\bx-\bx^*)+\bw}\right)\leq\frac{1}{2}~,
	\end{equation}
	where $\tilde{f}(\bx)=\norm{M^{1/2}(\bx-\bx^*)}$ is the ``hard function'' defined in \lemref{lem:hardf}\footnote{Also, the equation can be verified to hold in the corner case where $M^{1/2}(\bx-\bx^*)+\bw=\mathbf{0}$, in which the condition in \eqref{eq:almosthard} is undefined.}. To see why, note first that by definition of $g_{\bw}(\cdot)$ in \lemref{lem:channel}, for any $\bx$ which satisfies the condition in the displayed equation above, we have $g_{\bw}(M^{1/2}(\bx-\bx^*))=\norm{M^{1/2}(\bx-\bx^*)}=\tilde{f}(\bx)$. On the other hand, since this is a non-negative function, it follows that it also equals $\max\{-1,g_{\bw}(M^{1/2}(\bx-\bx^*))\}=h_{\bw}(\bx)$ for such $\bx$, establishing the displayed equation above. 
	
	Next, we will show that \eqref{eq:almosthard} also holds over a set of $\bx$'s which have a more convenient form. To do so, fix some $\bx$ which satisfies the \emph{opposite} condition $\bar{\bw}^\top\left(\overline{M^{1/2}(\bx-\bx^*)+\bw}\right)>\frac{1}{2}$. Then multiplying both sides by $\norm{M^{1/2}(\bx-\bx^*)+\bw}$, we get
	\begin{align*}
	\bar{\bw}^\top M^{1/2}(\bx-\bx^*)+\bar{\bw}^\top \bw ~>~
	\frac{1}{2}\norm{M^{1/2}(\bx-\bx^*)+\bw}~\geq~
	\frac{1}{2}\left(\norm{M^{1/2}(\bx-\bx^*)}-\norm{\bw}\right)~.
	\end{align*}
	Since $\frac{1}{2}\leq \lambda_{\min}(M)\leq\lambda_{\max}(M)\leq 1$ by \lemref{lem:hardf}, it follows that
	\[
	\bar{\bw}^\top M^{1/2}(\bx-\bx^*)+\norm{\bw}~>~ \frac{1}{2}\left(\frac{1}{\sqrt{2}}\norm{\bx-\bx^*}-\norm{\bw}\right)~.
	\]
	For $\bx=\bx^*$, the condition above is trivially satisfied. For $\bx\neq\bx^*$, dividing both sides by $\norm{M^{1/2}(\bx-\bx^*)}$ (which is between $\norm{\bx-\bx^*}$ and $\frac{1}{2}\norm{\bx-\bx^*}$) and simplifying a bit, we get that
	\[
	\bar{\bw}^\top \left(\overline{M^{1/2}(\bx-\bx^*)}\right)~>~
	\frac{1}{2\sqrt{2}}-\frac{3\norm{\bw}}{2\cdot \frac{1}{2}\norm{\bx-\bx^*}}~=~
	\frac{1}{2\sqrt{2}}-\frac{\exp(-T)}{100\norm{\bx-\bx^*)}}~.
	\]
	Noting that any $\bx$ which does not satisfy the condition in  \eqref{eq:almosthard} satisfy the condition above, we get that \eqref{eq:almosthard} implies 
	\begin{equation}\label{eq:almosthard2}
	h_{\bw}(\bx)=\tilde{f}(\bx)=\norm{M^{1/2}(\bx-\bx^*)}~~~\forall \bx\neq \bx^*~\text{s.t.}~\bar{\bw}^\top \left(\overline{M^{1/2}(\bx-\bx^*)}\right)~\leq~\frac{1}{2\sqrt{2}}-\frac{\exp(-T)}{100\norm{\bx-\bx^*}}~.
	\end{equation}
	
	With this equation in hand, let us first establish the third bullet of the lemma, assuming the algorithm we consider is  deterministic. In order to do so, let $\bx_1^{\tilde{f}},\ldots,\bx_T^{\tilde{f}}$ be the (fixed) iterates produced by the algorithm when ran on $\tilde{f}(\cdot)$, and choose $\bw$ in $h_{\bw}(\cdot)$ to be any vector orthogonal to $\{M^{1/2}(\bx_t^{\tilde{f}}-\bx^*)\}_{t=1}^{T}$ (which is possible since the dimension $d$ is larger than $T$). By \lemref{lem:hardf}, we know that for all $t$, $\norm{\bx_t^{\tilde{f}}-\bx^*}\geq \exp(-T)$, in which case we have
	\[
	\bar{\bw}^\top\left(\overline{M^{1/2}(\bx_t^{\tilde{f}}-\bx^*)}\right)~=~0~<~ \frac{1}{2\sqrt{2}}-\frac{\exp(-T)}{100\exp(-T)}
	~\leq~ \frac{1}{2\sqrt{2}}-\frac{\exp(-T)}{100\norm{\bx_t^{\tilde{f}}-\bx^*}}~.
	\]
	Thus, $\bx_t^{\tilde{f}}$ satisfies the condition in \eqref{eq:almosthard2}, and as a result, $h_{\bw}(\bx_t^{\tilde{f}})=\tilde{f}(\bx_t^{\tilde{f}})$ for all $t$. Moreover, using the fact that $\bx_t^{\tilde{f}}$ is bounded away from $\bx^*$, it is easily verified that the condition in \eqref{eq:almosthard2} also holds for $\bx$ in a small local neighborhood of $\bx_t^{\tilde{f}}$, so actually $h_{\bw}(\cdot)$ is identical to $\tilde{f}(\cdot)$ on these local neigborhoods, implying the same values \emph{and gradients} at $\bx_t^{\tilde{f}}$. As a result, if we run the algorithm on $h_{\bw}(\cdot)$ rather than $f(\cdot)$, then the iterates $\bx_1,\ldots,\bx_T$ produced are identical to $\bx_1^{\tilde{f}},\ldots,\bx_T^{\tilde{f}}$. Since $\norm{\bx_t^{\tilde{f}}-\bx^*}>0$, we have $h_{\bw}(\bx_t)=\tilde{f}(\bx_t^{\tilde{f}})=\norm{M^{1/2}(\bx_t^{\tilde{f}}-\bx^*)}>0$ for all $t$ as required.
	
	We now turn to establish the third bullet of the lemma, assuming the algorithm is randomized. As before, we let $\bx_1^{\tilde{f}},\ldots,\bx_T^{\tilde{f}}$ denote the iterates produced by the algorithm when ran on $\tilde{f}(\cdot)$ (only that now they are possibly random, based on the algorithm's random coin flips). The proof idea is roughly the same, but here the iterates may be randomized, so we cannot choose $\bw$ in some fixed manner. Instead, we will pick $\bw$ independently and uniformly at random among vectors of norm $\frac{1}{300}\exp(-T)$, and show that for any realization of the algorithm's random coin flips, with probability at least $1-T\exp(-d/18)$ over $\bw$, $\min_t h_{\bw}(\bx_t)>0$. This implies that there exists some \emph{fixed} choice of $\bw$, for which $\min_t h_{\bw}(\bx_t)>0$ with the same high probability over the algorithm's randomness, as required\footnote{To see why, assume on the contrary that for any fixed choice of $\bw$, the bad event $\min_t h_{\bw}(\bx_t)\leq 0$ occurs with probability larger than $T\exp(-d/18)$ over the algorithm's randomness. In that case, any randomization over the choice of $\bw$ will still yield $\min_t h_{\bw}(\bx_t)\leq 0$ with probability larger than $T\exp(-d/18)$ over the joint randomness of $\bw$ and the algorithm. In particular, this bad event will hold with probability larger than $T\exp(-d/18)$ for some realization of the algorithm's coin flips.}. To proceed, we collect two observations:
	\begin{enumerate}
	\item By \lemref{lem:hardf}, we know that for any realization of the algorithm's random coin flips, $\min_{t\in \{1,\ldots,T\}} \norm{\bx_t-\bx^*}\geq\exp(-T)>0$.
	\item If we fix some unit vectors $\bu_1,\ldots,\bu_T$ in $\reals^d$, and pick a unit vector $\bu$ uniformly at random, then by a union bound and a standard large deviation bound (e.g., \cite{tkocz2012upper}), $\Pr(\max_t \bu^\top \bu_t\geq a)\leq T\cdot\Pr(\bu^\top \bu_1\geq a)\leq T\exp(-da^2/2)$. Taking $\bw=\bu$,  $\bu_t=\overline{M^{1/2}(\bx_t^{\tilde{f}}-\bx^*)}$ for all $t$ (for some realization of $\bx_t^{\tilde{f}}$), and $a=1/3$, it follows that for any realization of the algorithm's random coin flips, $\max_t \bw^\top (\overline{M^{1/2}\bx_t^{\tilde{f}}-\bx^*})\geq 1/3$ with probability at most $T\exp(-d/18)$ over the choice of $\bw$.
	\end{enumerate}
	Combining the two observations, we get that for any realization of the algorithm's coin flips, with probability at least $1-T\exp(-d/18)$ over the choice of $\bw$, it holds for all $\bx_1^{\tilde{f}},\ldots,\bx_T^{\tilde{f}}$ that
	\[
	\bar{\bw}^\top \left(\overline{M^{1/2}(\bx_t^{\tilde{f}}-\bx^*)}\right)~< \frac{1}{3}~<~\frac{1}{2\sqrt{2}}-\frac{\exp(-T)}{100\exp(-T)}~\leq~\frac{1}{2\sqrt{2}}-\frac{\exp(-T)}{100\norm{\bx-\bx^*}}~,
	\]
	as well as $\norm{\bx_t^{\tilde{f}}-\bx^*}>0$. Using the same argument as in the deterministic case, it follows from \eqref{eq:almosthard2} that $h_{\bw}(\cdot)$ and $\tilde{f}(\cdot)$ coincide in small neighborhoods around $\bx_1^{\tilde{f}},\ldots,\bx_T^{\tilde{f}}$, with probability at least $1-T\exp(-d/18)$. Since the algorithm's iterates depend only on the local values/gradients returned by the oracle, it follows that for any realization of the algorithm's coin flips, with probability at least $1-T\exp(-d/18)$ over the choice of $\bw$, the iterates $\bx_1,\ldots,\bx_T$ and $\bx_1^{\tilde{f}},\ldots,\bx_T^{\tilde{f}}$ are going to be identical, and satisfy
	\[
	\min_t h_{\bw}(\bx_t)=\min_t h_{\bw}(\bx_t^{\tilde{f}})=\min_t \tilde{f}(\bx_t^{\tilde{f}})>0~.
	\]
	This holds for any realization of the algorithm's random coin flips, which as discussed earlier, implies the required result.
\end{proof}

The theorem is now an immediate corollary of the lemma above: With the specified high probability (or deterministically), $\min_t h_{\bw}(\bx_t)>0$, even though all $\epsilon$-stationary points (for any $\epsilon < \frac{1}{2\sqrt{2}}$) have a value of $-1$. Since $h_{\bw}$ is also $7$-Lipschitz, we get that the distance of any $\bx_t$ from an $\epsilon$-stationary point must be at least $\frac{0-(-1)}{7}=\frac{1}{7}$. Simplifying the numerical terms by choosing a large enough constant $C$ and a small enough constant $c$, the theorem follows.

\section{Discussion} \label{sec:discussion}

\thmref{thm:main} implies that at least with black-box oracle access to the function, it is probably impossible to design algorithms with finite-time guarantees for finding near-approximately-stationary points. This raises the question of what alternative notions of stationarity can we consider when trying to efficiently optimize generic nonconvex nonsmooth functions. 

One very appealing notion is the $(\delta,\epsilon)$-stationarity of \cite{zhang2020complexity} that we discussed in the introduction, which comes with clean finite-time guarantees. Our negative result in \thmref{thm:main} provides further motivation to consider it, by showing that a natural strengthening of this notion will not work. However, as we showed in Proposition \ref{prop:stat} and remark \ref{remark:propstrong}, we need to accept that this stationarity notion can have unexpected behavior, and there exist cases where it will not resemble a stationary point in any intuitive sense. 

Another possible direction is to replace the convex hull in the definition of $(\delta,\epsilon)$-stationarity by some fixed convex combination of gradients in the $\delta$-neighborhood of our point. For example, we might define a point $\bx$ as $\widetilde{(\delta,\epsilon)}$-stationary with respect to a function $f(\cdot)$, if $\norm{\E_{\bu}[\nabla f(\bx+\delta \bu)]}\leq \epsilon$, where $\bu$ is uniformly distributed in the unit origin-centered ball. For Lipschitz functions (which are almost everywhere differentiable), this operation is well-defined, and is generally equivalent to finding $\epsilon$-stationary points of the \emph{smoothed function $\tilde{f}(\bx)=\E_{\bu}[f(\bx+\delta \bu)]$}, which can be done efficiently given access to gradients of $f(\cdot)$ (see \citet{duchi2012randomized,ghadimi2013stochastic}). However, it is important to note that the gradient Lipschitz parameter of $\tilde{f}(\cdot)$ is generally dimension-dependent, and thus we will not get dimension-free guarantees if we simply plug in existing results for smooth functions. Moreover, this notion of $\widetilde{(\delta,\epsilon)}$-stationarity still does not rule out counter-intuitive behaviors similar to Proposition \ref{prop:stat}, where a point is $\widetilde{(\delta,0)}$-stationary without actually having a near-zero gradient in its $\delta$-neighborhood. 

\begin{figure}
	\includegraphics[trim=2cm 0cm 1cm 0cm,clip=true,width=0.5\linewidth]{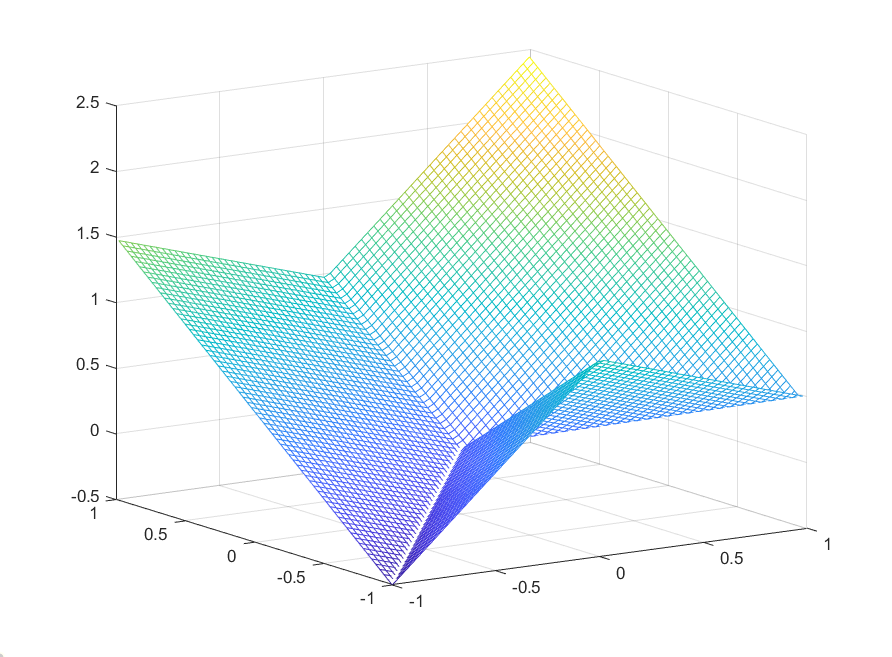}%
	\includegraphics[trim=1cm 0cm 1cm 0cm,clip=true,width=0.5\linewidth]{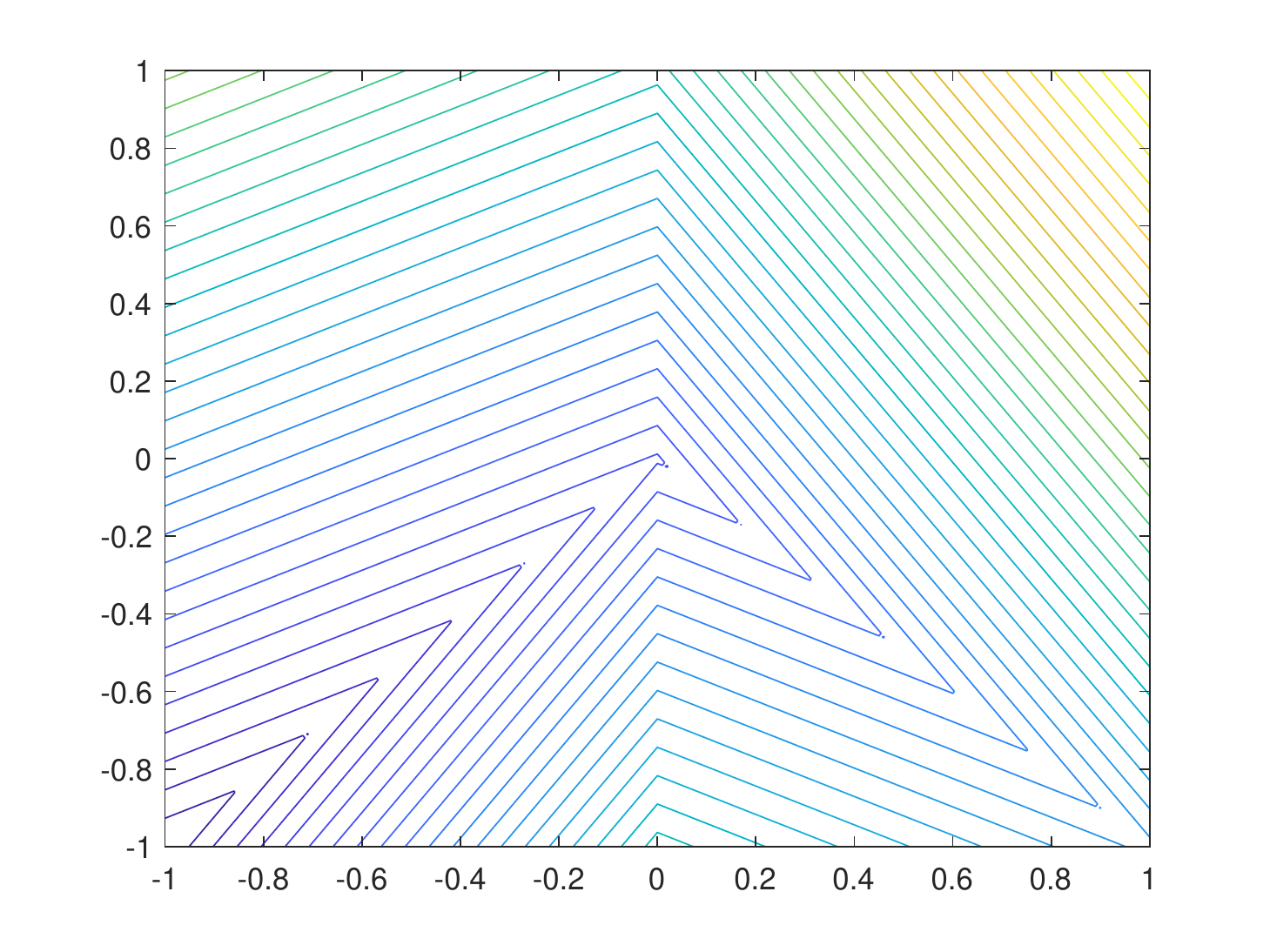}
	\caption{Mesh and Contour plot of the function from \eqref{eq:warga}. Best viewed in color.}
	\label{fig:warga}
\end{figure}

In a related direction, one might consider finding $\epsilon$-stationary points of other smooth approximations of the original function, which have a better behavior. For example, for nonconvex functions, a well-known smoothing operation with dimension-free guarantees is the Lasry-Lions regularization \citep{lasry1986remark,attouch1993approximation}, which is closely related to the Moreau-Yosida regularization for smoothing convex functions. However, this operation does not appear to be efficiently computable in general.

Finally, it is important to step back and point out that when considering optimization of nonsmooth nonconvex functions, it is generally difficult to relate stationarity properties to any meaningful local optimality properties, even more so than in the smooth case. For example, consider the simple nonsmooth bivariate function studied in \citep{warga1981fat,czarnecki2006approximation},
\begin{equation}\label{eq:warga}
f(u,v) = \Big|\left|u\right|+v\Big|+\frac{1}{2}u~,
\end{equation}
which is illustrated in \figref{fig:warga}. It can be shown that the origin is a (Clarke) stationary point. However, it is not even approximately-stationary with respect to standard smooth approximations of the function, regardless of how tight we make them. Also, it is clearly not a point we would like our algorithm to converge to, if we actually try to minimize the function. %Moreover, in many nonsmooth nonconvex functions of interest (e.g., training neural networks), it is often computationally easy to find stationary points which are not interesting from an optimization perspective. 
This suggests looking for notions beyond stationarity, for which we can still provide algorithmic guarantees even for nonconvex nonsmooth functions.

\bibliographystyle{plainnat}
\bibliography{bib}

\appendix

\section{Proof of \thmref{thm:hardquad}}\label{app:oracle}

In the proof, we let bold-faced letters (e.g., $\bx$) denote vectors, and $x_i$ denote the $i$-th coordinate of the vector $\bx$. Also, we let $\be_1,\be_2,\ldots$ denote the standard basis vectors. 

Our proof will closely follow the analysis employed in \citet[Theorem 3]{lan2018optimal} for a slightly different setting.

Fix an iteration budget $T$ and some dimension $d\geq T$. Let $A$ be the symmetric $d\times d$ tridiagonal matrix defined as
\begin{align*}
\forall 1\leq i < T~~~~~~~~&A(i,i)=2~,~A(i,i+1)=-1\\
\forall 1<i\leq T~~~~~~~~ &A(i,i-1)=-1\\
&A(T,T)=k:=\frac{\sqrt{2}+3}{\sqrt{2}+1}\\
&A(i,j)=0~~~ \text{for all other $(i,j)$}~.
\end{align*}
Also, for some constant $b$ to be determined later, define the quadratic function
\[
g(\bx)~:=~\bx^\top M \bx -\frac{1}{4}\be_1^\top\bx+b~~~\text{where}~~~ M=\frac{1}{8}(A+4I)~.
\]
It is easily verified that this function can be equivalently written as
\begin{equation}\label{eq:chain}
g(\bx)~=~\frac{1}{8}\left(x_1^2+\sum_{i=1}^{T-1}(x_i-x_{i+1})^2+(k-1)x_T^2-2x_1\right)+\frac{1}{2}\norm{\bx}^2+b~.
\end{equation}

We first collect a few useful facts about $g(\cdot)$, stated in the following two lemmas:
\begin{lemma}\label{lem:M}
	$M$ satisfies $\frac{1}{2}\leq\lambda_{\min}(M)\leq\lambda_{\max}(M)\leq 1$. As a result, $M$ is positive definite, and $g(\cdot)$ is strictly convex and has a unique minimum.
\end{lemma}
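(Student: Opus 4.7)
The plan is to translate the claim into an equivalent statement about the eigenvalues of $A$, and then bound the quadratic form $\bx^\top A\bx$ directly. Since $M=\frac{1}{8}(A+4I)$, every eigenvalue of $M$ has the form $\frac{\lambda+4}{8}$ where $\lambda$ is an eigenvalue of $A$, so the bounds $\frac{1}{2}\le \lambda_{\min}(M)$ and $\lambda_{\max}(M)\le 1$ are equivalent to $0\le \lambda_{\min}(A)$ and $\lambda_{\max}(A)\le 4$ respectively. Note also that $A$ is zero outside its top-left $T\times T$ block, so for any $\bx\in\reals^d$ the quadratic form $\bx^\top A\bx$ depends only on $x_1,\ldots,x_T$, while $\norm{\bx}^2 \ge \sum_{i=1}^T x_i^2$; this will mean the coordinates past index $T$ only help us.

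The first concrete step is to read off the identity
\[
\bx^\top A\,\bx~=~ x_1^2 + \sum_{i=1}^{T-1}(x_i-x_{i+1})^2 + (k-1)x_T^2,
\]
which follows either by direct expansion using the definition of $A$, or (more conveniently) by comparing \eqref{eq:chain} with the definition $g(\bx) = \bx^\top M\bx - \tfrac14 \be_1^\top\bx + b$ and using $\bx^\top M\bx = \tfrac18 \bx^\top A\bx + \tfrac12\norm{\bx}^2$. Since $k = \frac{\sqrt{2}+3}{\sqrt{2}+1} > 1$, the right-hand side is a sum of non-negative terms, which immediately gives $\bx^\top A\bx \ge 0$ for all $\bx$, hence $\lambda_{\min}(A)\ge 0$ and $\lambda_{\min}(M)\ge \tfrac12$.

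For the upper bound $\lambda_{\max}(A)\le 4$, I would apply the elementary inequality $(x_i-x_{i+1})^2\le 2(x_i^2+x_{i+1}^2)$ term-by-term. This yields
\[
\bx^\top A\bx ~\le~ x_1^2 + 2\sum_{i=1}^{T-1}(x_i^2+x_{i+1}^2) + (k-1)x_T^2 ~=~ 3x_1^2 + 4\sum_{i=2}^{T-1} x_i^2 + (k+1)x_T^2.
\]
A direct check shows $k+1 = \frac{2\sqrt{2}+4}{\sqrt{2}+1} < 4$, so each coefficient is at most $4$, and thus the right-hand side is at most $4\sum_{i=1}^T x_i^2 \le 4\norm{\bx}^2$. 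Consequently $\lambda_{\max}(A)\le 4$ and $\lambda_{\max}(M)\le 1$.

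Finally, since all eigenvalues of $M$ lie in $[\tfrac12,1]$, $M$ is positive definite, so $g(\bx) = \bx^\top M\bx - \tfrac14 \be_1^\top\bx + b$ is strictly convex; its gradient $2M\bx - \tfrac14\be_1$ has the unique zero $\bx = \tfrac{1}{8}M^{-1}\be_1$, which is therefore the unique minimizer. There is no real obstacle here — the main ``trick'' is just recognizing the sum-of-squares representation of the quadratic form, which the paper has effectively already handed us via \eqref{eq:chain}; everything else reduces to a couple of lines of elementary algebra.
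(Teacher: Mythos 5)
Your proof is correct and follows essentially the same route as the paper's: both use the sum-of-squares identity $\bx^\top A\bx = x_1^2 + \sum_{i=1}^{T-1}(x_i-x_{i+1})^2 + (k-1)x_T^2$ to get $\lambda_{\min}(A)\ge 0$, then bound each cross term by $(x_i-x_{i+1})^2\le 2(x_i^2+x_{i+1}^2)$ and observe $k+1<4$ to get $\lambda_{\max}(A)\le 4$, finally shifting by $\tfrac18(A+4I)$. The only cosmetic addition on your side is writing out the explicit minimizer $\tfrac18 M^{-1}\be_1$, which the paper defers to the next lemma.
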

\begin{proof}
	$A$ is symmetric, and for any $\bx\in \reals^T$, we have
	\[
	\bx^\top A \bx ~=~ x_1^2+\sum_{i=1}^{T-1}(x_i-x_{i+1})^2+(k-1)x_T^2~.
	\]
	This is non-negative, which establishes that $A$ is a positive semidefinite matrix. Hence, by definition of $M$, $\lambda_{\min}(M)=\frac{1}{8}(\lambda_{\min}(A)+4)\geq \frac{1}{8}\cdot 4 = \frac{1}{2}$, which implies that $M$ is positive definite. As a result, $g(\cdot)$ is strictly convex and has a unique minimum. Also, by the displayed equation above,
	\begin{align*}
	\bx^\top A \bx ~&\leq~ x_1^2+2\sum_{i=1}^{T-1}(x_i^2+x_{i+1}^2)+(k-1)x_T^2
	~\leq~ 3x_1^2+\sum_{i=2}^{T-1}(2x_i^2+2x_{i+1}^2)+(k-1)x_T^2\\
	&=~ 3x_1^2+4\sum_{i=2}^{T-1}x_i^2+(k+1)x_T^2~\leq~ 4\norm{\bx}^2~,
	\end{align*}
	where we use the fact that $k\leq 3$. This establishes that $\lambda_{\max}(A)\leq 4$, and therefore $\lambda_{\max}(M)=\frac{1}{8}(\lambda_{\max}(A)+4)\leq 1$.
\end{proof}

\begin{lemma}\label{lem:opt}
	The minimum $\bx^*$ of $g(\cdot)$ is of the form $\bx^*=(q,q^2,\ldots,q^T,0,\ldots,0)$, where $q=\frac{\sqrt{2}-1}{\sqrt{2}+1}$. Moreover, $\norm{\bx^*}\leq \sqrt{\frac{\sqrt{2}-1}{2}}<\frac{1}{2}$.  
\end{lemma}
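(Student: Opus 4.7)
By \lemref{lem:M}, $g(\cdot)$ is strictly convex, so the minimizer $\bx^*$ is unique and characterized by $\nabla g(\bx^*)=2M\bx^*-\tfrac{1}{4}\be_1=\mathbf{0}$, which is equivalent to the linear system $(A+4I)\bx^*=\be_1$. Since $A$ is zero outside its top-left $T\times T$ block, $A+4I$ acts as $4I$ on coordinates $T+1,\ldots,d$, so the system immediately forces $x^*_i=0$ for $i>T$. Hence the problem reduces to solving the $T$-dimensional tridiagonal system
\[
6x_1-x_2=1,\qquad -x_{i-1}+6x_i-x_{i+1}=0\ \ (1<i<T),\qquad -x_{T-1}+(k+4)x_T=0.
\]

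The interior recurrence $x_{i+1}-6x_i+x_{i-1}=0$ has characteristic polynomial $r^2-6r+1=0$, with roots $3\pm 2\sqrt{2}$. I guess the ansatz $x_i=q^i$ for $q=3-2\sqrt{2}=\frac{\sqrt{2}-1}{\sqrt{2}+1}$ (the smaller root, which gives a bounded, decaying solution), and verify all three sets of equations by direct substitution: the interior equation collapses to $q^2-6q+1=0$, the $i=1$ boundary becomes $6q-q^2=1$, which is the same identity, and the $i=T$ boundary reduces to $k+4=1/q=3+2\sqrt{2}$, i.e.\ $k=2\sqrt{2}-1$. Rationalizing $\frac{\sqrt{2}+3}{\sqrt{2}+1}=\frac{(\sqrt{2}+3)(\sqrt{2}-1)}{1}=2\sqrt{2}-1$ confirms that the definition of $k$ in the excerpt matches exactly, establishing $\bx^*=(q,q^2,\ldots,q^T,0,\ldots,0)$.

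For the norm bound, I use the geometric series
\[
\norm{\bx^*}^2=\sum_{i=1}^T q^{2i}<\frac{q^2}{1-q^2}.
\]
With $q^2=(3-2\sqrt{2})^2=17-12\sqrt{2}$ and $1-q^2=12\sqrt{2}-16$, a short rationalization gives $\frac{q^2}{1-q^2}=\frac{3\sqrt{2}-4}{8}$, which is strictly less than $\frac{4\sqrt{2}-4}{8}=\frac{\sqrt{2}-1}{2}$. Therefore $\norm{\bx^*}\leq\sqrt{\frac{\sqrt{2}-1}{2}}$, and this is less than $\frac{1}{2}$ since the inequality $\frac{\sqrt{2}-1}{2}<\frac{1}{4}$ reduces to $\sqrt{2}<\frac{3}{2}$.

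The main obstacle is the boundary equation at $i=T$: the specific constant $k=\frac{\sqrt{2}+3}{\sqrt{2}+1}$ appears unmotivated at first glance, and the content of the calculation is that $k$ has been engineered precisely so that the pure geometric sequence $q^i$ already satisfies the terminal condition without needing to mix in the second root $3+2\sqrt{2}$ (which would otherwise blow up and force a much more involved formula for $\bx^*$). Once the identity $k+4=1/q$ is recognized, the rest of the proof is routine tridiagonal linear algebra and a geometric sum.
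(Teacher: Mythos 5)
Your proof takes essentially the same approach as the paper: reduce the stationarity condition $(A+4I)\bx^*=\be_1$ to the tridiagonal system, plug in the geometric ansatz $x_i=q^i$, and observe that $q$ is the decaying root of $r^2-6r+1=0$ while $k$ has been engineered so that $k+4=1/q$ satisfies the terminal boundary condition. One small point in your favor: the paper's norm computation writes $\sum_{i=1}^T q^i$ where it should read $\sum_{i=1}^T q^{2i}$ (a typo that happens to be a valid upper bound since $q\in(0,1)$), whereas your calculation with $\sum q^{2i}<\frac{q^2}{1-q^2}=\frac{3\sqrt{2}-4}{8}<\frac{\sqrt{2}-1}{2}$ is both literally correct and tighter. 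You also make explicit the observation that coordinates beyond $T$ vanish because $A+4I$ acts as $4I$ there, which the paper leaves implicit.
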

\begin{proof}
	By the previous lemma and the fact that $g(\cdot)$ is differentiable, $\bx^*$ is the unique point satisfying $\nabla g(\bx^*)=\mathbf{0}$. Thus, it is enough to verify that the formula for $\bx^*$ stated in the lemma indeed satisfies this equation. Computing the gradient of $g(\cdot)$ using the formulation in \eqref{eq:chain}), we just need to verify that
	\[
	6q-q^2-1 = 0~~,~~
	\forall i\in \{2,\ldots,T-1\}~~q^{i-1}-6q^i+q^{i+1}=0
	~~,~~(k+4)q^T-q^{T-1}=0~,
	\]
	or equivalently,
	\[
	1-6q+q^2=0~~,~~ (k+4)q-1=0~,
	\]
	which is easily verified to hold for the value of $q$ stated in the lemma.
	Finally, we have
	\[
	\norm{\bx^*}^2=\sum_{i=1}^{d}(x^*_i)^2=\sum_{i=1}^{T}q^i< \sum_{i=1}^{\infty}q^i~=~\frac{q}{1-q}=\frac{\sqrt{2}-1}{2},
	\] 
	implying $\norm{\bx^*}\leq \sqrt{\frac{\sqrt{2}-1}{2}}$	as required.
\end{proof}

Finally, we assume that the constant term $b$ in \eqref{eq:chain} is fixed so that $g(\bx^*)=0$, which means that $g(\cdot)$ can be written in the form
\begin{equation}\label{eq:fMform}
g(\bx) ~=~ (\bx-\bx^*)^\top M (\bx-\bx^*)~.
\end{equation}

With this construction in hand, we now turn to prove the theorem. We will start with the family of linear-span algorithms $\Acal_{span}$, using any dimension $d\geq T$, and take $g(\cdot)$ as the ``hard'' function on which we will prove a lower bound (note that by the lemmas above and \eqref{eq:fMform}, it satisfies the conditions stated in the theorem).

Consider any algorithm in $\Acal_{span}$, and note that by the structure of $g(\cdot)$ as specified in \eqref{eq:chain}, when the algorithm picks its first query $\bx_1=\mathbf{0}$, it receives a gradient in $\text{span}\{\be_1\}$. Because of the linear-span assumption, it means that $\bx_2\in \text{span}(\be_1)$, which again by \eqref{eq:chain} means that the returned gradient is in $\text{span}\{\be_1,\be_2\}$. Continuing this process, it is easily seen by induction that 
\[
\bx_t\in \text{span}\left\{\be_1,\ldots,\be_{t-1}\right\}
\]
for all $t$, and in particular, $\{\bx_1,\ldots,\bx_T\}\subset \text{span}\left(\be_1,\ldots,\be_{T-1}\right)$. As a result,
\[
\min_{t\in \{1,\ldots,T\}} \norm{\bx_t-\bx^*}^2~\geq~ \inf_{\bx\in \text{span}\{\be_1,\ldots,\be_{T-1}\}} \norm{\bx_t-\bx^*}^2~\geq~
(x_T^*)^2,
\]
which by \lemref{lem:opt} is at least $q^{2T}=\left(\frac{\sqrt{2}-1}{\sqrt{2}+1}\right)^{2T}$. Taking a square root, we get that
\[
\min_{t\in \{1,\ldots,T\}}\norm{\bx_t-\bx^*}~\geq~ \left(\frac{\sqrt{2}-1}{\sqrt{2}+1}\right)^T~\geq~ \exp(-T)~,
\]
as stated in the theorem.

We now turn to prove the theorem for deterministic algorithms. This time, we will let the dimension be any $d\geq 2T$. Fixing an algorithm, and letting $\bu_1,\ldots,\bu_T$ be orthonormal vectors to be specified shortly, we prove the lower bound for the function 
\begin{equation}\label{eq:chaindet}
\tilde{g}(\bx)~:=~\frac{1}{8}\left((\bu_1^\top \bx)^2+\sum_{i=1}^{T-1}(\bu_i^\top \bx-\bu_{i+1}^\top\bx)^2+(k-1)(\bu_T^\top\bx)^2-2\bu_1^\top \bx\right)+\frac{1}{2}\norm{\bx}^2+b~.
\end{equation}
Importantly, we note that
\[
\tilde{g}(\bx)~=~g(U\bx)
\]
where $g(\cdot)$ is the function defined previously in \eqref{eq:chain}, and $U$ is an orthogonal matrix whose first $T$ rows are $\bu_1,\ldots,\bu_T$, and the rest of the rows are some arbitrary completion of the first $T$ rows to an orthonormal basis. Thus, $\tilde{g}(\cdot)$ is equivalent to $g(\cdot)$ up to a rotation of the coordinate system specified by $U$. In particular, using \eqref{eq:fMform}, it follows that
\begin{align*}
\tilde{g}(\bx) ~&=~ (U\bx-\bx^*)^\top M(U\bx-\bw^*) ~=~ (\bx-U^\top \bx^*)^\top (U^\top M U)(\bx-U^\top \bx^*)\\
&=~ (\bx-\tilde{\bx}^*)^\top \tilde{M} (\bx-\tilde{\bx}^*)~,
\end{align*}
where $\tilde{M}=U^\top M U$ and $\tilde{\bx}^*=U^\top \bx^*$. Thus, we see that $\tilde{g}(\cdot)$ has the form required in the theorem, with a matrix $\tilde{M}$ whose spectrum is identical to $M$, and a minimizer $\tilde{\bx}^*=U^\top \bx^*$ whose norm is the same as $\norm{\bx^*}$ (and therefore satisfying the conditions in the theorem). 

We now specify how to choose $\bu_1,\ldots,\bu_T$ in the function definition, so as to get the lower bound on $\min_t \norm{\bx_t-\tilde{\bx}^*}$: Since the algorithm is deterministic, its first query point $\bx_1$ is known in advance. We therefore choose $\bu_1$ to be some unit vector orthogonal to $\bx_1$. Assuming that $\bu_2,\bu_3,\ldots$ are orthogonal to $\{\bu_1,\bx_1\}$ (which we shall justify shortly), we have by \eqref{eq:chaindet} that $\tilde{g}(\bx_1)$ and $\nabla \tilde{g}(\bx_1)$ depend only on $\bx_1,\bu_1$, and not on $\bu_2,\bu_3,\ldots$. As the algorithm is deterministic and depends only on the observed values and gradients, this means that even before choosing $\bu_2,\bu_3,\ldots$, we can already simulate its next iteration, and determine the next query point $\bx_2$. We now pick $\bu_2$ to be some unit vector orthogonal to $\bu_1$ as well as to $\bx_1,\bx_2$. Again by the same considerations, if we assume $\bu_3,\bu_4,\ldots$ are orthogonal to $\{\bu_i,\bx_i\}_{i=1}^{2}$, we have that $\tilde{g}(\bx_2)$ and $\nabla \tilde{g}(\bx_2)$ depend only on $\{\bu_i,\bx_i\}_{i=1}^{2}$, and independent of $\bu_3,\bu_4,\ldots$. So again, we can simulate it and determine the next query point $\bx_3$. We continue this process up to iteration $T$, where we fix $\bu_T$ orthogonal to $\{\bu_i,\bx_i\}_{i=1}^{T-1}$ and to $\bx_T$ (this process is possible as long as the dimension $d$ is at least $2(T-1)+1+1=2T$, as we indeed assume). 

As a result of this process, we get that $\bu_T^\top \bx_t=0$ for all $t\in \{1,\ldots,T\}$. Also, since $\tilde{\bx}^*=U^\top\bx^*$, we also have $\bu_T^\top \tilde{\bx}^* = \bu_T^\top U^\top \tilde{\bx}^* = x_T^*$. Using \lemref{lem:opt}, we get that for all $t\in \{1,\ldots,T\}$,
\[
\norm{\bx_t-\tilde{\bx}^*}^2 \geq (\bu_T^\top(\bx_t-\tilde{\bx}^*))^2 ~=~ (0-x_T^*)^2 = \left(\frac{\sqrt{2}-1}{\sqrt{2}+1}\right)^{2T}~,
\]
which implies that 
\[
\min_{t\in\{1,\ldots,T\}}\norm{\bx_t-\bx^*}~\geq~ \left(\frac{\sqrt{2}-1}{\sqrt{2}+1}\right)^T~\geq~\exp(-T)
\]
as required.

\end{document}